\definecolor{bblue}{rgb}{.2,0.2,.8}
\theoremstyle{plain}
\newtheorem{theorem}{Theorem}[section]
\newtheorem{proposition}[theorem]{Proposition}
\newtheorem{lemma}[theorem]{Lemma}
\theoremstyle{definition}
\newtheorem{assumption}[theorem]{Assumption}
\theoremstyle{remark}
\numberwithin{equation}{section}
\numberwithin{theorem}{section}
\newcommand{\bb}[1]{{\mathbb #1}}
\newcommand{\ms}[1]{{\mathscr #1}}
\renewcommand{\epsilon}{\varepsilon}
\DeclareMathOperator{\tr}{Tr}
\title[Donsker-Varadhan meet Freidlin-Wentzell]{
  Large deviations for diffusions:\\ Donsker-Varadhan meet Freidlin-Wentzell 
  }
\author{L. Bertini}
\address{Lorenzo Bertini \hfill\break \indent
   Dipartimento di Matematica, Universit\`a di Roma `La Sapienza'
   \hfill\break \indent   P.le Aldo Moro 2, 00185 Roma, Italy}
 \email{bertini@mat.uniroma1.it}
\author{D. Gabrielli}
\address{\noindent Davide Gabrielli \hfill\break\indent 
 DISIM, Universit\`a dell'Aquila
\hfill\break\indent 
67100 Coppito, L'Aquila, Italy
}
\email{davide.gabrielli@univaq.it}
\author{C. Landim}
\address{Claudio Landim
  \hfill\break\indent IMPA \hfill\break\indent Estrada Dona Castorina
  110, \hfill\break\indent
J. Botanico, 22460 Rio de Janeiro, Brazil\hfill\break\indent
  {\normalfont and} \hfill\break\indent CNRS UMR 6085, Universit\'e de
  Rouen, \hfill\break\indent Avenue de l'Universit\'e, BP.12,
  Technop\^ole du Madril\-let, \hfill\break\indent
F76801 Saint-\'Etienne-du-Rouvray, France.} 
\email{landim@impa.br}
\begin{document}

\dedicatory{To Errico Presutti for his constant help and encouragement} 

\begin{abstract}
  We consider a diffusion process on $\bb R^n$ and prove a large
  deviation principle for the empirical process in the joint limit in
  which the time window diverges and the noise vanishes. The
  corresponding rate function is given by the expectation of the
  Freidlin-Wentzell functional per unit of time. As an application of
  this result, we obtain a variational representation of the rate
  function for the Gallavotti-Cohen observable in the small noise and
  large time limits. 
\end{abstract}

\noindent
\keywords{Large deviations, Empirical process,
  $\Gamma$-convergence, Gallavotti-Cohen observable}

\subjclass[2010]
{Primary
  60J60, 
  60F10; 
  Secondary
  82C31. 
}

\maketitle
\thispagestyle{empty}

\section{Introduction}
\label{s:0}

A diffusion processes on $\bb R^n$ can be realized as the solution to the
stochastic differential equation
\begin{equation}
  \label{2.2}
  \begin{cases}
    d\xi^\epsilon_t = b(\xi^\epsilon_t) dt + \sqrt{2\epsilon}\,
    \sigma(\xi^\epsilon_t)  dw_t\\
    \xi^\epsilon_0=x
  \end{cases}
\end{equation}
where $b$ is a smooth vector field, $w$ is a standard
$m$-dimensional Brownian, $\sigma$ is a $n\times m$ matrix
field, and the parameter $\epsilon>0$, that can be interpreted as the
temperature of the environment, will eventually vanish.
We shall impose conditions on $b$ and $\sigma$ which ensure the
ergodicity of the process $\xi^\epsilon$.

An \emph{additive functional} $\{A_T\}_{T\ge 0}$ of $\xi^\epsilon$ is
a real-valued, progressively measurable, functional of $\xi^\epsilon$
vanishing at $T=0$ and such that $A_{T+S} = A_T + A_S \circ \theta_T$,
where $\theta_T$ denotes the translation by $T$. Readily, functions of
the occupation measure, i.e.\ functional of the form
\begin{equation}
  \label{l2}
  A_T= \int_0^T \!dt\, f(\xi^\epsilon_t), \qquad f\colon \bb R^n\to \bb R,
\end{equation}
are examples of additive functionals. The basic question that we here
address is the behavior of additive functionals in the joint limit in
which the time window $[0,T]$ diverges and the noise $\epsilon$ vanishes. 
More precisely, we establish a large deviation principle in such
joint limit. According to the specific system modeled by \eqref{2.2}
and the details of the experimental setting, both the regimes
$\epsilon\ll T^{-1}$ and $\epsilon\gg T^{-1}$ are relevant.

According to the Donsker-Varadhan ideology \cite{dv1-4}, rather than
focusing on a single additive functional, the large deviation
principle is better formulated for a whole family of additive
functionals. This is formally realized by analyzing the
asymptotics of the \emph{empirical process} and the corresponding 
large deviations are usually called at \emph{level three}. Of course,
the rate function for a specific additive functional can then be
obtained by projecting the level three rate function.

To purse the joint limit $T\to \infty$ and $\epsilon\to 0$ there are
two simple alternatives.  (i) By taking first the limit
$\epsilon\to 0$ the large deviations of the empirical process can be
obtained by lifting the Freidlin-Wentzell asymptotic \cite{fw} to the
set of translation invariant probabilities on the path space. The
limit as $T\to \infty$ is then achieved by analyzing the variational
convergence of the corresponding, $T$-dependent, rate function.  (ii)
By taking first the limit $T \to \infty$ the large deviations of the
empirical process are directly given by the level three
Donsker-Varadhan asymptotic \cite{dv1-4}.  The limit as
$\epsilon\to 0$ is then achieved by analyzing the variational
convergence of the corresponding, $\epsilon$-dependent, rate function.
We here follow both these alternative and show they lead to the same
conclusion, the resulting rate function being particularly simple to
describe: it is the expectation of the Freidlin-Wentzell rate function
per unit of time.  When the deterministic dynamical system obtained by
setting $\epsilon=0$ in \eqref{2.2} has not a unique attractor, as it
is the case for metastable processes, this large deviation rate
function has not a unique zero. Therefore higher order large
deviations asymptotics can be investigated. For these asymptotics, the
order of the limit procedure $\epsilon\to 0$ and $T\to \infty$ becomes
relevant. We refer to \cite{dgm,bgl2,la,la2} for the corresponding analysis in the
context of reversible processes when the limit $\epsilon\to 0$ is
taken after $T\to\infty$.

In the context of non-equilibrium statistical mechanics, a relevant
additive functional not of the form \eqref{l2} is the Gallavotti-Cohen
observable \cite{gc,ku1,ls,ma1}. As we here discuss, its large
deviations in joint limit in which the time window diverges and the
noise vanishes can be obtained by projection.

The analysis here performed shares common features with the one
carried out in \cite{bgl} for the weakly asymmetric exclusion process
in the hydrodynamic scaling limit.
The present setting avoids the technicalities involved in hydrodynamic
limits and the core of the argument is more transparent.  On the other
hand, the non-compactness of the state space requires additional
estimates.

\section{Notation and main result}
\label{s:1}

We denote by $\cdot$ the canonical inner product in $\bb R^n$ and by
$|\cdot|$ the corresponding Euclidean norm. For $\epsilon>0$ we
consider the diffusion process on $\bb R^n$ with generator
$L_\epsilon$ defined on $C^2$ functions on $\bb R^n$ with compact
support  by
\begin{equation}
  \label{2.1}
  L_\epsilon f =\epsilon \tr ( a D^2 f) + b\cdot \nabla f
\end{equation}
where $D^2f$, respectively $\nabla f$, denotes the Hessian,
respectively the gradient, of $f$ and
$a= \{a_{i,j}(\cdot),\,i,j=1,\ldots,n\big\}$, respectively 
$b=\{b_i(\cdot),\,i=1,\ldots,n\big\}$, are the diffusion matrix
and the drift.
We suppose that the vector field $b$ admits the decomposition
\begin{equation}
  \label{2.0}
  b = - a \nabla V + c.
\end{equation}
Hereafter, we assume without further mention that $a,V,c$ meet the
following conditions in which we denote by $\bb M_n$ the set of
symmetric $n\times n$ matrices.

\begin{assumption}$~$
  \label{t:2.0}
  \begin{itemize}
  \item[(i)] $V$ belongs to $C^2(\bb R^n)$, $V\ge 0$, 
    $
    \displaystyle{
      \lim_{|x|\to \infty}  \nabla V (x) \cdot \frac x{|x|} =+\infty
    }$, and there exists $\epsilon_0>0$ such that 
    \begin{equation*}
      \displaystyle{
      \lim_{|x|\to \infty} \Big[ \nabla V(x)\cdot a(x) \nabla V(x)
      - \epsilon_0 \tr\big(a(x) D^2 V(x) \big)\Big]=+\infty;}
    \end{equation*}
  \item [(ii)] $c$ belongs to $C^1(\bb R^n;\bb R^n)$ and it is bounded
    with bounded derivatives;
  \item [(iii)] $a$ belongs to $C^2(\bb R^n;\bb M_n)$, it is bounded
    with bounded derivatives, and it is uniformly elliptic,
    i.e., there is constant $C>0$ such that
    $v\cdot a(x)v \ge  C^{-1} |v|^2$ for any $x,v\in \bb
    R^n$. 
\end{itemize}
\end{assumption}

The process generated by $L_\epsilon$ and initial condition
$x\in \bb R^n$ can be realized as the solution to the stochastic
differential equation \eqref {2.2} choosing $\sigma$ a globally
Lipschitz matrix field satisfying $a=\sigma\sigma^\dagger$.
In the present context, the vector field $b$ is not necessary globally
Lipschitz; however Assumption~\ref{t:2.0} implies there exists a
unique strong solution to \eqref{2.2}, see e.g.\ \cite[Thm.~3.5]{has}.
We shall denote the law of $\xi^\epsilon$ by $\bb P^\epsilon_x$ that,
given $T>0$, we regard as a probability on $C([0,T];\bb R^n)$.

We denote by $D(\bb R;\bb R^n)$ the space of c\`adl\`ag paths with
values on $\bb R^n$ that we consider endowed with the Skorokhod
topology on bounded intervals and the associated Borel
$\sigma$-algebra.  Given $T>0$ and a path $X\in C([0,T];\bb R^n)$ we
denote by $X^T\in D(\bb R;\bb R^n)$ its $T$-\emph{periodization},
i.e.,
\begin{equation*}
  (X^T)_t := X_{t-\lfloor t/T\rfloor T}, \qquad t\in \bb R. 
\end{equation*}
Observe that $X^T$ is $T$-periodic and continuous except at the times
$k T$, $k\in \bb Z$ where it has the jump of size $X_0-X_T$.  For
$t\in \bb R$ we denote by
$\theta_t \colon D(\bb R;\bb R^n)\to D(\bb R;\bb R^n)$ the translation
by $t$ namely, $(\theta_t X)_s:= X_{s-t}$, $s\in \bb R$. We finally
denote by $\ms P_\theta$ the set of translation invariant
probabilities on $D(\bb R;\bb R^n)$, i.e.\ the set of Borel
probabilities $P$ satisfying $P\circ \theta_t^{-1} = P$ for any
$t\in \bb R$. We consider $\ms P_\theta$ endowed with the topology
induced by weak convergence and the associated Borel $\sigma$-algebra.

Given $T>0$, the \emph{empirical process} is the map 
$R_T \colon C([0,T];\bb R^n) \to \ms P_\theta$ defined by
\begin{equation}
  \label{2.3}
  R_T (X) :=  \frac 1T \int_0^T\!dt\, \delta_{\theta_t X^T}.
\end{equation}
Note indeed that, by the $T$-periodicity of $X^T$, the right hand side
defines a translation invariant probability on $D(\bb R;\bb R^n)$.

Our main result establishes the large deviation principle for the
family of probabilities on $\ms P_\theta$ given by
$\big\{\bb P^\epsilon_x\circ R_T^{-1}\big\}$ in the joint limit
$\epsilon\to 0$ and $T\to \infty$.
Let us first recall the
Freidlin-Wentzell functional associated to \eqref{2.2}.
Given $T>0$, denote by $H_1=H_1([0,T])$ the set of absolutely
continuous paths $X\colon [0,T]\to \bb R^n$ such that $\int_0^T\!dt\,
|\dot X_t|^2 <+\infty$ and let $I_{[0,T]}\colon C([0,T];\bb R^n)\to
\bb [0,+\infty]$ be the functional defined by 
\begin{equation}
  \label{2.4}
  I_{[0,T]}(X) :=
  \begin{cases}
    \displaystyle{
      \! \frac 14 \!\int_{0}^T \!\!\! dt 
      \big[\dot X_t - b(X_t)\big]\cdot a^{-1}(X_t)
      \big[\dot X_t - b(X_t)\big]}
    &\!\!\!\textrm{if $X\in H_1$,}
    \\
    +\infty &\!\!\!
    \textrm{otherwise.}
  \end{cases}
\end{equation}
We regard $I_{[0,T]}$ as a functional on $D(\bb R;\bb R^n)$
understating that $I_{[0,T]}(X)$ is infinite if the restriction of $X$
to $[0,T]$ does not belong to $C([0,T];\bb R^n)$.  We then let
$\ms I\colon \ms P_\theta\to [0,+\infty]$ be the functional defined by
\begin{equation}
  \label{2.5}
  \ms I(P) := \int \! dP(X)\, I_{[0,1]}(X).
\end{equation}
Observe that $\ms I$ is affine and, by the translation invariance of
$P$, if  $\ms I(P) <+\infty$ then $P$-a.s. $t\mapsto X_t$ is
absolutely continuous.  
In the next statement we use the shorthand notation
$\varlimsup_{T,\epsilon}$ for either
$\varlimsup_{\epsilon\to 0} \varlimsup_{T\to \infty}$ or
$\varlimsup_{T\to\infty} \varlimsup_{\epsilon\to 0}$. Analogously,
$\varliminf_{T,\epsilon}$ stands for either
$\varliminf_{\epsilon\to 0} \varliminf_{T\to \infty}$
or $\varliminf_{T\to \infty} \varliminf_{\epsilon\to 0}$.

\begin{theorem}
\label{t:1}
As $\epsilon\to 0$ and $T\to \infty$,
the family $\big\{ \bb P^\epsilon_x \circ R_T^{-1},\,
T>0,\,\epsilon>0\big\}$ satisfies, uniformly for $x$ in compacts, a
large deviation principle with speed $\epsilon^{-1}T$ and rate
function $\ms I$. Namely, for each compact set $K\subset\subset \bb R^n$, 
each closed set $C \subset \ms P_\theta$,
and each open set $A\subset \ms P_\theta$ 
\begin{equation*}
  \begin{split}
    \varlimsup_{T,\epsilon}\;
    \sup_{x \in K} \frac \epsilon T
    \log \bb P^\epsilon_x \big( R_{T} \in C\big)
    \le -\inf_{P \in C} \ms I (P) \;,
    \\
    \varliminf_{T,\epsilon}\;
    \inf_{x \in K} \frac \epsilon T
    \log \bb P^\epsilon_x \big( R_{T} \in A\big)
    \ge -\inf_{P \in A} \ms I (P) \;.
\end{split}
\end{equation*}
Moreover, the functional $\ms I$ is good and affine.
\end{theorem}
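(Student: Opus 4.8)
The plan is to establish the two bounds by following the two ``alternatives'' described in the introduction and then comparing the resulting $T$-dependent (resp.\ $\epsilon$-dependent) rate functions. I would first treat the limit $\epsilon\to 0$ taken before $T\to\infty$. For fixed $T$, by lifting the Freidlin--Wentzell large deviation principle for $\bb P^\epsilon_x$ on $C([0,T];\bb R^n)$ (which holds uniformly on compacts under Assumption~\ref{t:2.0}) to the space $\ms P_\theta$ via the continuous map $X\mapsto R_T(X)$, one obtains, by the contraction principle, a large deviation principle with speed $\epsilon^{-1}$ and rate function
\begin{equation*}
  \ms I_T(P) \;:=\; \inf\Big\{\tfrac1T\, I_{[0,T]}(X)\;:\; X\in C([0,T];\bb R^n),\ R_T(X)=P\Big\}.
\end{equation*}
The core analytic task is then to show that $\ms I_T$ $\Gamma$-converges to $\ms I$ as $T\to\infty$, together with the exponential tightness/compactness estimates that upgrade $\Gamma$-convergence of rate functions to a large deviation principle for the double limit. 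The $\Gamma$-liminf inequality should follow from superadditivity-type arguments: given $P_T\to P$ with $\sup_T\ms I_T(P_T)<\infty$, one extracts near-optimal periodic paths, uses the translation invariance of $P$ together with the affine, local structure of $I_{[0,1]}$, and a subadditive/ergodic argument on the time axis to bound $\liminf_T\ms I_T(P_T)\ge \int dP(X)\,I_{[0,1]}(X)=\ms I(P)$. For the $\Gamma$-limsup (recovery sequence), given $P$ with $\ms I(P)<\infty$ one builds periodic approximants by ``cutting and pasting'' a typical trajectory under $P$ over a long window and closing it up, paying a vanishing cost per unit time for the splice; here the superlinear confinement in Assumption~\ref{t:2.0}(i) is what keeps the gluing cost and the tails under control.

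For the reverse order, $T\to\infty$ before $\epsilon\to 0$, I would invoke the level-three Donsker--Varadhan large deviation principle for $\bb P^\epsilon_x\circ R_T^{-1}$ as $T\to\infty$ (valid for each fixed $\epsilon$ by ergodicity, which Assumption~\ref{t:2.0} guarantees), with speed $T$ and rate function the Donsker--Varadhan functional $\ms I^{\mathrm{DV}}_\epsilon$. One then shows $\epsilon\,\ms I^{\mathrm{DV}}_\epsilon$ $\Gamma$-converges to $\ms I$ as $\epsilon\to 0$; this is the natural counterpart of the classical fact that the Donsker--Varadhan rate function, rescaled by temperature, converges to the Freidlin--Wentzell action, now lifted to $\ms P_\theta$. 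Again this must be accompanied by uniform-in-$\epsilon$ exponential tightness on $\ms P_\theta$, which is where the non-compactness of $\bb R^n$ forces the extra estimates alluded to in the introduction: one needs a Lyapunov-function argument (using $V$ and the condition in Assumption~\ref{t:2.0}(i)) to show that the empirical process cannot escape to spatial infinity at sub-exponential cost.

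Finally, the last sentence of the statement: $\ms I$ is affine directly from its definition \eqref{2.5} as an integral against $P$ of a fixed functional, and nonnegativity of $I_{[0,1]}$ gives $\ms I\ge 0$; lower semicontinuity and compactness of sublevel sets (goodness) follow either as a byproduct of the $\Gamma$-convergence and the exponential tightness established above, or can be checked directly since $I_{[0,1]}$ is a good rate function on $C([0,1];\bb R^n)$ and its ``integrated'' version inherits lower semicontinuity on $\ms P_\theta$ by Fatou together with the lower semicontinuity of $P\mapsto \int dP\,I_{[0,1]}$ for lower semicontinuous bounded-below integrands. I expect the main obstacle to be the $\Gamma$-limsup/recovery-sequence construction uniformly controlling the splicing cost and the excursions to infinity — equivalently, proving the matching large deviation lower bound in the joint limit — since it requires simultaneously handling periodization, the non-compact state space, and the interchange of the two limits; the $\Gamma$-liminf and the soft properties of $\ms I$ are comparatively routine.
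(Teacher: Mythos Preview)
Your proposal is correct and follows essentially the same two-route strategy as the paper: Freidlin--Wentzell plus contraction followed by $\Gamma$-convergence in $T$ (the paper's Section~\ref{s:2}), and Donsker--Varadhan followed by $\Gamma$-convergence of $\epsilon\,\ms H^\epsilon$ in $\epsilon$ (Section~\ref{s:3}). One point you gloss over: since the Freidlin--Wentzell rate function for $\bb P^\epsilon_x$ carries the constraint $X_0=x$, the contracted rate function is $\ms I^x_{[0,T]}(P)=\inf\{I_{[0,T]}(X):X_0=x,\ R_T(X)=P\}$ rather than your $x$-independent $\ms I_T$, and the $\Gamma$-convergence must be formulated along arbitrary sequences $\{x_T\}\subset K$ (both in the liminf and in the recovery sequence) in order to deliver the LDP uniformly for $x$ in compacts; the paper tracks this explicitly in Theorem~\ref{t:2.2} and in the ensuing proof of Theorem~\ref{t:1}.
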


Referring to Section~\ref{s:4} for an application of this result to
the asymptotics of the Gallavotti-Cohen observable, we next mention
some of its possible developments. While Theorem~\ref{t:1} suggests
that the large deviations hold whenever $(\epsilon,T)\to (0,+\infty)$,
the proof relies in computing first the limit as $T\to \infty$ and then
$\epsilon\to 0$ or the converse. It thus appears that a truly joint
limit requires new methods.  In the case in which the limiting
deterministic dynamical system obtained by setting $\epsilon=0$ in
\eqref{2.2} has more than a single stationary probability, as it is
the case for metastable processes, the zero level set of the
functional $\ms I$ is not a singleton.  In the spirit of the so-called
development by $\Gamma$-convergence, see e.g.\ \cite[\S~1.10]{Br}, it
is then possible to investigate higher order large deviations
asymptotics. In the case of reversible diffusions, this development
for the Fisher information, i.e.\ the Donsker-Varadhan level two rate
function for the occupation measure, has been achieved in
\cite{dgm}. The corresponding analysis for finite state Markov chains
has been carried out in \cite{bgl2,la,la2}. We emphasize that the limits
as $T\to \infty$ and $\epsilon\to 0$ do not commute for the higher
order large deviations. While the present analysis is carried out for
non-degenerate diffusion processes, the problem of computing the small
noise limit of the level three Donsker-Varadhan functional can be
formulated for general Markov processes.
According to \eqref{2.3}, the empirical process has been
defined in terms of the $T$-periodization of the path.  While this
choice is not relevant for the statements in Theorem~\ref{t:1}, it
will affect the higher order large deviations.

\section{Large time limit after small noise limit}
\label{s:2}

Recalling \eqref{2.4}, for $T>0$ and $x\in \bb R^n$ let
$I^x_{[0,T]} \colon C([0,T];\bb R^n)\to [0,+\infty]$ be the functional
defined by
\begin{equation}
  \label{3.1}
  I^x_{[0,T]}(X):=
  \begin{cases}
    I_{[0,T]} (X) & \textrm{if $X_0=x$,}\\
    +\infty &\textrm{otherwise.}
  \end{cases}
\end{equation}
Let also
$\ms I^x_{[0,T]} \colon \ms P_\theta \to [0,+\infty]$ be defined by
\begin{equation}
  \label{3.2}
  \ms I^x_{[0,T]} (P) := \inf\big\{  I^x_{[0,T]} (X),\; 
  R_T(X) = P \big\}\;,
\end{equation}
where we adopt the standard convention that the infimum over the empty
set is $+\infty$. Note that, for $X\in C([0,T];\bb R^n)$, if
$X(0) \not = X(T)$ or if $X(0) = X(T) = x$, then
$\ms I^x_{[0,T]} (P) = I^x_{[0,T]} (X)$. In contrast, if $X(0) = X(T)$
and $X(0) \not = x$, $I^x_{[0,T]} (X) = +\infty$ and
$\ms I^x_{[0,T]} (P)$ may be finite if $X(t) =x$ for some
$0\le t\le T$.  In view of the continuity of the map
$C([0,T];\bb R^n)\ni X \mapsto R_T(X) \in \ms P_\theta$, the following
statement follows directly, by the contraction principle, from the
Freidlin-Wentzell asymptotics \cite{fw}.  The present case of an
unbounded vector field $b$ is covered by \cite[Thm.~III.2.13]{az}.

\begin{lemma}
  \label{t:2.1}
  Fix $T>0$. As $\epsilon\to 0$ the family
  $\big\{ \bb P^\epsilon_x \circ R_T^{-1},\, \epsilon>0\big\}$
  satisfies, uniformly for $x$ in compacts, a large deviation
  principle with speed $\epsilon^{-1}$ and good rate function
  $\ms I^x_{[0,T]}$.
  Namely, for each $x\in \bb R^n$, each sequence
  $x_\epsilon\to x$, each closed set $C \subset \ms P_\theta$, and each
  open set $A\subset \ms P_\theta$
  \begin{equation*}
  \begin{split}
    \varlimsup_{\epsilon\to 0}\;
    \epsilon 
    \log \bb P^\epsilon_{x_\epsilon} \big( R_{T} \in C\big)
    \le -\inf_{P \in C} \ms I^x_{[0,T]} (P)
    \\
    \varliminf_{\epsilon\to 0}\;
    \epsilon 
    \log \bb P^\epsilon_{x_\epsilon} \big( R_{T} \in A\big)
    \ge -\inf_{P \in A} \ms I^x_{[0,T]} (P).
\end{split}
\end{equation*}
\end{lemma}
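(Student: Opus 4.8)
The plan is to obtain Lemma~\ref{t:2.1} as a direct consequence of the Freidlin-Wentzell large deviation principle for the family $\big\{\bb P^\epsilon_x\big\}$ on $C([0,T];\bb R^n)$ via the contraction principle applied to the continuous map $R_T$. First I would recall the Freidlin-Wentzell statement in the form valid for an unbounded drift of the type covered by Assumption~\ref{t:2.0}: for fixed $T>0$, the family $\big\{\bb P^\epsilon_x\circ (\cdot)^{-1}\big\}$ on $C([0,T];\bb R^n)$ (equivalently, viewed on $D(\bb R;\bb R^n)$ through the $T$-periodization embedding $X\mapsto X^T$) satisfies a large deviation principle with speed $\epsilon^{-1}$ and good rate function $I^x_{[0,T]}$, uniformly for $x$ in compact sets; this is precisely \cite[Thm.~III.2.13]{az} (with the exponential tightness and uniformity in the starting point coming from the coercivity of $V$ in Assumption~\ref{t:2.0}(i)). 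The key point that makes this off-the-shelf is that the conditions in Assumption~\ref{t:2.0} guarantee exponential tightness of $\big\{\bb P^\epsilon_x\circ R_T^{-1}\big\}$ in $\ms P_\theta$, locally uniformly in $x$, so that the upper bound can be upgraded from compact to arbitrary closed sets.

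Next I would verify that the map $C([0,T];\bb R^n)\ni X\mapsto R_T(X)\in \ms P_\theta$ is continuous. Since $R_T(X)=\frac1T\int_0^T dt\,\delta_{\theta_t X^T}$, uniform convergence $X^{(k)}\to X$ on $[0,T]$ implies $\theta_t (X^{(k)})^T\to \theta_t X^T$ in the Skorokhod topology on bounded intervals for each $t$ (the only jumps being at the lattice times $kT$ and depending continuously on the endpoints $X_0,X_T$), and then, by bounded convergence for the integral of continuous bounded test functionals on $D(\bb R;\bb R^n)$, $R_T(X^{(k)})\to R_T(X)$ weakly. This continuity is exactly what is asserted in the sentence preceding the lemma, so I would only spell out the Skorokhod-topology detail around the jump times.

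With continuity in hand, the contraction principle (Varadhan's version, valid under exponential tightness and for good rate functions) transfers the Freidlin-Wentzell LDP from $C([0,T];\bb R^n)$ to $\ms P_\theta$, yielding a large deviation principle with speed $\epsilon^{-1}$ and rate function
\begin{equation*}
  P\longmapsto \inf\big\{ I^x_{[0,T]}(X)\,:\, R_T(X)=P\big\}=\ms I^x_{[0,T]}(P),
\end{equation*}
which is good because $I^x_{[0,T]}$ is good and $R_T$ is continuous. The uniformity for $x$ in compacts, and the version with a converging sequence $x_\epsilon\to x$, are inherited from the corresponding uniform Freidlin-Wentzell statement in \cite{az}; concretely one uses that $I^{x_\epsilon}_{[0,T]}$ $\Gamma$-converges to $I^x_{[0,T]}$ as $x_\epsilon\to x$ (a short check, since a minimizing path from $x_\epsilon$ can be concatenated with a short, low-cost arc joining $x$ to $x_\epsilon$), together with the uniform exponential tightness.

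The main obstacle is not the contraction step itself, which is routine, but making sure the unbounded drift does not break the Freidlin-Wentzell input: one must confirm that Assumption~\ref{t:2.0} puts the diffusion \eqref{2.2} within the scope of \cite[Thm.~III.2.13]{az}, i.e.\ that the coercivity hypotheses on $V$ and the boundedness of $c$ and $a$ provide the exponential tightness of $\big\{\bb P^\epsilon_x\big\}$ on $C([0,T];\bb R^n)$, locally uniformly in $x$. Granting that — which is the content of the cited theorem — everything else is a formal application of the contraction principle and of standard facts about the Skorokhod topology.
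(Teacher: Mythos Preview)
Your proposal is correct and follows essentially the same route as the paper: the paper's entire argument is the sentence that the result ``follows directly, by the contraction principle, from the Freidlin-Wentzell asymptotics'' together with the remark that the unbounded drift is covered by \cite[Thm.~III.2.13]{az}, and you have simply spelled out those two ingredients (continuity of $R_T$ and the Azencott version of Freidlin--Wentzell) in more detail. The extra comments on exponential tightness and on the $\Gamma$-convergence of $I^{x_\epsilon}_{[0,T]}$ are not needed once you invoke the uniform-in-compacts form of the Freidlin--Wentzell LDP directly, since the contraction principle preserves that uniformity automatically.
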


In order to achieve the proof of Theorem~\ref{t:1} we next analyze the
variational convergence of the family of functionals
$\big\{ T^{-1} \,\ms I^x_{[0,T]}\big\}$ as $T\to \infty$.  With
respect to the standard framework of $\Gamma$-convergence, see e.g.\
\cite{Br}, in the present setting there is the additional dependence on
the parameter $x$, for which we need uniformity on compacts.

\begin{theorem}
  \label{t:2.2}
  Fix a compact set $K\subset\subset \bb R^n$.
  \begin{itemize}
  \item [(i)] If a sequence $\{P_T\}\subset \ms P_\theta$ satisfies
    $\varliminf_T T^{-1} \,\ms I^{x_T}_{[0,T]} (P_T) < +\infty$ for
    some $\{x_T\}\subset K$ then $\{P_{T}\}$ has a pre-compact
    sub-sequence.
  \item[(ii)] For any $P\in \ms P_\theta$, any sequence
    $\{x_T\} \subset K$, and any sequence $P_T\to P$
    \begin{equation*}
      \varliminf_{T\to \infty} \frac 1T \ms I^{x_T}_{[0,T]}(P_T) \ge
      \ms I (P).
    \end{equation*}
  \item[(iii)] For any $P\in \ms P_\theta$ and any sequence
    $\{x_T\}\subset K$ there exists a sequence $P_T\to P$ such that
    \begin{equation*}
      \varlimsup_{T\to \infty} \frac 1T \ms I^{x_T}_{[0,T]}(P_T) \le
      \ms I (P).
    \end{equation*}
  \end{itemize}
\end{theorem}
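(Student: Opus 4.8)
The plan is to prove the three items of Theorem~\ref{t:2.2} as a $\Gamma$-convergence statement for the family $\{T^{-1}\ms I^{x_T}_{[0,T]}\}$, treating the dependence on $x_T\in K$ as a harmless perturbation since $V$ and the Freidlin-Wentzell cost control how fast a path can reach a point of $K$.

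\medskip\noindent\textbf{Compactness (i).}
Suppose $T^{-1}\ms I^{x_T}_{[0,T]}(P_T)\le M$ along a subsequence. For each such $T$ pick $X^{(T)}\in C([0,T];\bb R^n)$ with $R_T(X^{(T)})=P_T$ and $T^{-1}I^{x_T}_{[0,T]}(X^{(T)})\le M+1$. The plan is to extract tightness of $\{P_T\}$ on $\ms P_\theta$ by controlling one-dimensional marginals, using that $I_{[0,1]}(X)\ge\frac14\int_0^1|\dot X_t-b(X_t)|\,a^{-1}\cdots$ forces, via Assumption~\ref{t:2.0}(i), an a priori bound on $\int dP_T(X)\,V(X_0)$: indeed a Lyapunov-type computation shows $\frac{d}{dt}V(X_t)=\nabla V(X_t)\cdot\dot X_t$, and completing the square in $I_{[0,1]}$ together with the coercivity of $\nabla V\cdot a\nabla V-\epsilon_0\tr(aD^2V)$ yields $\int dP_T(X)\,V(X_0)\le C(M+1)$ uniformly in $T$. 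Since $V$ has superlinear growth, $\{$law of $X_0$ under $P_T\}$ is tight; combined with the uniform energy bound $\int dP_T(X)\int_0^1|\dot X_t|^2dt\le C(M+1)$ and the translation invariance of $P_T$, Arzel\`a--Ascoli on path space gives tightness of $\{P_T\}$ in $\ms P_\theta$, hence a convergent subsequence.

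\medskip\noindent\textbf{Lower bound (ii).}
Fix $P_T\to P$ in $\ms P_\theta$ with $c:=\varliminf_T T^{-1}\ms I^{x_T}_{[0,T]}(P_T)<\infty$ (otherwise nothing to prove), and pass to a subsequence realizing the liminf. For the chosen near-optimizers $X^{(T)}$, the key identity is that periodization and the contraction $R_T$ give
\[
  \frac1T I_{[0,T]}(X^{(T)})
  \;=\; \int dR_T(X^{(T)})(Y)\; I_{[0,1]}(Y)
  \;=\; \int dP_T(Y)\, I_{[0,1]}(Y)
\]
up to the boundary-jump correction (the jump of $(X^{(T)})^T$ at integer multiples of $T$ contributes $O(1/T)$, because only a bounded number of the unit windows in $[0,T]$ see the discontinuity and $I_{[0,1]}$ is $+\infty$ there — this is precisely the point where the definition of $\ms I^x_{[0,T]}$ via the infimum lets us discard those windows). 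Thus $\varliminf_T T^{-1}\ms I^{x_T}_{[0,T]}(P_T)=\varliminf_T\int dP_T(Y)\,I_{[0,1]}(Y)$, and the result follows from lower semicontinuity of $Y\mapsto I_{[0,1]}(Y)$ on $D(\bb R;\bb R^n)$ (a standard fact for Freidlin-Wentzell functionals, convex in $\dot X$) together with the affinity of the integral and Portmanteau/lsc of $P\mapsto\int I_{[0,1]}\,dP$.

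\medskip\noindent\textbf{Recovery sequence (iii).}
Given $P\in\ms P_\theta$ with $\ms I(P)<\infty$ (the case $\ms I(P)=+\infty$ being trivial) and a sequence $x_T\in K$, the plan is to build $P_T$ by a standard ``block'' construction: by the ergodic decomposition and affinity of $\ms I$ it suffices to treat $P$ ergodic; approximate $P$ weakly by laws $P^{(\delta)}$ of smoothed, bounded-energy stationary paths with $\ms I(P^{(\delta)})\le\ms I(P)+\delta$; for such a nice $P^{(\delta)}$, use a typical sample path $Y$ on $[0,(1-\eta)T]$, then in the remaining window $[(1-\eta)T,T]$ insert a \emph{bridge} that first drives the configuration to $x_T$ (feasible at finite FW cost uniformly over $x_T\in K$ because $K$ is compact and, e.g., the controlled dynamics can reach any point of $K$ from any point of $\operatorname{supp}$ in bounded time with cost $\le C_K$) and closes the loop back to $Y_0$. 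Then $X^{(T)}$ is a continuous path on $[0,T]$ with $X^{(T)}_0=x_T$, its periodization has no forbidden jump contribution, and $T^{-1}I_{[0,T]}(X^{(T)})\le (1-\eta)\ms I(P^{(\delta)})+T^{-1}C_K\to\ms I(P^{(\delta)})$; meanwhile $R_T(X^{(T)})\to P^{(\delta)}$ by the ergodic theorem. A diagonal argument in $\delta,\eta$ then produces $P_T\to P$ with $\varlimsup_T T^{-1}\ms I^{x_T}_{[0,T]}(P_T)\le\ms I(P)$.

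\medskip\noindent\textbf{Main obstacle.}
The delicate point is the compactness statement (i): extracting tightness on $\ms P_\theta$ in a non-compact state space from only a bound on the \emph{average} FW cost requires the Lyapunov estimate built from Assumption~\ref{t:2.0}(i), and one must be careful that the near-optimizing paths $X^{(T)}$ genuinely satisfy the Lyapunov inequality even though $b$ is not globally Lipschitz. The recovery-sequence bridge also needs a quantitative, $x_T$-uniform reachability bound for the FW functional over the compact $K$, but that is classical once $K$ is fixed.
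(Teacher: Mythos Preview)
Your overall strategy matches the paper's, but there is a genuine gap in item~(ii), and a related oversight in~(i), both stemming from the jump that $T$-periodization inserts. In~(ii) your ``key identity'' $\frac1T I_{[0,T]}(X^{(T)}) = \int dP_T(Y)\, I_{[0,1]}(Y)$ is simply false whenever $X^{(T)}_0\neq X^{(T)}_T$: for a set of shifts $t$ of Lebesgue measure~$1$ the path $\theta_t (X^{(T)})^T$ restricted to $[0,1]$ contains the jump, hence $I_{[0,1]}=+\infty$ there, and since $P_T=R_T(X^{(T)})$ assigns mass $1/T>0$ to that set the right-hand side is $+\infty$, not the left side ``up to $O(1/T)$.'' Your remark that ``the definition of $\ms I^x_{[0,T]}$ via the infimum lets us discard those windows'' is a non-sequitur: that infimum runs over paths $X$ with $R_T(X)=P$, not over time windows. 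The paper's remedy is to replace $P_T$ by $\tilde P_T := (T-1)^{-1}\int_0^{T-1}dt\,\delta_{\theta_t (X^{(T)})^T}$, which avoids the jump, still converges to $P$, and for which one has the one-sided \emph{inequality} $I_{[0,T]}(X^{(T)})\ge (T-1)\int d\tilde P_T\, I_{[0,1]}$; lower semicontinuity of $I_{[0,1]}$ then concludes.

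The same issue undermines your compactness argument in~(i): the $P_T$ are supported on paths in $D(\bb R;\bb R^n)$ with a jump, so the energy bound $\int dP_T\int_0^1|\dot Y|^2\,dt$ is again $+\infty$ as written, and a direct appeal to Arzel\`a--Ascoli is illegitimate. The paper instead notes that the $P_T$-probability of seeing a jump in any fixed window $[T_1,T_2]$ is at most $(T_2-T_1)/T\to0$, bounds the continuity modulus on the complement via Chebyshev and the energy estimate coming from~\eqref{3.3}, and invokes a tightness criterion in Skorokhod space. A minor point: expanding the square in $I_{[0,T]}$ produces~\eqref{3.3}, which controls $\int dP_T\,|\nabla V(X_0)|^2$ rather than $\int dP_T\,V(X_0)$ as you claim; either suffices for tightness of the time-zero marginal by Assumption~\ref{t:2.0}(i), but the latter does not fall out directly. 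Your item~(iii) is essentially the paper's construction (density of smooth holonomic measures as in Lemma~\ref{t:2.3}, then a short bridge from $x_T$ to the periodic orbit).
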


Assuming the above result, we first show that it implies the large
deviations of the empirical process in the limit in which first the
noise vanishes and then the time interval diverges.

\begin{proof}[Proof of Theorem~\ref{t:1} ($T\to \infty$ after
  $\epsilon\to 0$).] 
  We start by showing the goodness of the rate function. Since
  $I_{[0,T]}$ is lower semi-continuous, by Portmanteau theorem, $\ms I$
  is also lower semi-continuous. It thus suffices to show that $\ms I$
  has pre-compact sublevel sets.
  In view of the conditions in Assumption~\ref{t:2.0}, by expanding
  the square in \eqref{2.4} we deduce there are constants $\gamma,C>0$
  depending only on $V,c,a$ such that for any $X\in C([0,T];\bb R^n)$
  \begin{equation}
    \label{3.3}
      I_{[0,T]}(X) \ge \frac 12 \big[ V(X_T) -V(X_0)\big] + \gamma
      \int_0^T\!dt \, \big[ |\dot X_t|^2 +|\nabla V(X_t)|^2 \big] -
      CT.
  \end{equation}
  Take expectation with respect to $P$. The translation
  invariance of $P$ and the bound $I_{[0,T+S]}(X) \le I_{[0,T]}(X) +
  I_{[0,S]}(\theta_{-T} X)$ yields that
  for each bounded interval $[T_1,T_2]$
  \begin{equation}
    \label{3.3bis}
    \int\! dP(X) \bigg[
    |\nabla V(X_0)|^2
    + \frac 1{T_2-T_1} \int_{T_1}^{T_2}\!dt\, |\dot X_t|^2 \bigg]
    \le C \big[ 1 + \ms I(P) \big]
    \end{equation}
    for a new constant $C$. By the assumptions on $V$ and standard
    criterion, see e.g.\ \cite[Thm.~8.2]{bi}, $\ms I$ has pre-compact
    sublevel sets, as claimed.

  To prove the upper bound, we first observe that the Feller property
  of the semigroup generated by $L_\epsilon$ and the continuity of
  $R_T$ imply that for each closed set $C\subset \ms P_\theta$ the map
  $x\mapsto \bb P^\epsilon_x (R_T\in C)$ is upper
  semi-continuous. Therefore, given a compact set
  $K\subset \subset \bb R^n$, there exists a sequence
  $\{x_{T,\epsilon}\}\subset K$ such that
  \begin{equation*}
    \sup_{x\in K} \bb P^\epsilon_x (R_T\in C) =
    \bb P^\epsilon_{x_{T,\epsilon}} (R_T\in C).
  \end{equation*}
  By passing to a not relabeled sub-sequence we may assume that the
  sequence $\{x_{T,\epsilon}\}_{\epsilon>0}$ converges to some
  $x_T\in K$. From Lemma~\ref{t:2.1} we then deduce
  \begin{equation*}
    \varlimsup_{\epsilon\to 0}  \sup_{x\in K}\epsilon\log \bb P^\epsilon_x
    \big(R_T\in C\big) \le -\inf_{P\in C} \ms I^{x_T}_{[0,T]}(P)
  \end{equation*}
  so that
  \begin{equation*}
    \varlimsup_{T\to \infty}
    \varlimsup_{\epsilon\to 0}  \sup_{x\in K}\frac \epsilon T
    \log \bb P^\epsilon_x
    \big(R_T\in C\big)
    \le -
    \varliminf_{T\to \infty} \inf_{P\in C} \frac 1T
    \ms I^{x_T}_{[0,T]}(P).
  \end{equation*}
  If
  $\varliminf_{T}\inf_{P\in C} T^{-1} \ms I^{x_T}_{[0,T]}(P)=+\infty$
  the right-hand side above is trivially bounded above by
  $-\inf_{P\in C} \ms I(P)$. If conversely
  $\varliminf_{T}\inf_{P\in C} T^{-1} \ms I^{x_T}_{[0,T]}(P)<
  +\infty$, there exist sequences $T_k\to\infty$ and
  $\{P_{k}\}\subset C$ such that
  \begin{equation*}
    \varliminf_{T\to \infty} \inf_{P\in C} \frac 1T
    \ms I^{x_T}_{[0,T]}(P)=
    \lim_{k\to \infty} \frac 1{T_k}
    \ms I^{x_{T_k}}_{[0,T_k]}(P_{k})\;.
  \end{equation*}
  By item (i) in Theorem~\ref{t:2.2}, there exists $P^*$ and a further
  sub-sequence, still denoted by $\{P_{k}\}\subset C$, converging to
  $P^*$. By the goodness of the rate function $\ms I^{x}_{[0,T]}$,
  $P^*\in C$, and, by item (ii) in Theorem~\ref{t:2.2},
  \begin{equation*}
    \lim_{k\to \infty} \frac 1{T_k}
    \ms I^{x_{T_k}}_{[0,T_k]}(P_{k}) 
    \ge \ms I(P^*)  \ge \inf_{P\in C} \ms I(P)
  \end{equation*}
  which concludes the proof of the upper bound.

  To prove the lower bound, observe that, again by the Feller property
  of the semigroup generated by $L_\epsilon$ and the continuity of
  $R_T$, for each open set $A\subset \ms P_\theta$ the map
  $x\mapsto \bb P^\epsilon_x (R_T\in A)$ is lower
  semi-continuous. Therefore, given a compact set
  $K\subset \subset \bb R^n$, there exists a sequence
  $\{x_{T,\epsilon}\}\subset K$ such that
  \begin{equation*}
    \inf_{x\in K} \bb P^\epsilon_x (R_T\in A) =
    \bb P^\epsilon_{x_{T,\epsilon}} (R_T\in A).
  \end{equation*}
  By passing to a not relabeled sub-sequence we may assume that the
  sequence $\{x_{T,\epsilon}\}_{\epsilon>0}$ converges to some
  $x_T\in K$. From Lemma~\ref{t:2.1} we then deduce
  \begin{equation*}
    \varliminf_{\epsilon\to 0}  \inf_{x\in K}\epsilon\log \bb P^\epsilon_x
    \big(R_T\in A\big) \ge -\inf_{P\in A} \ms I^{x_T}_{[0,T]}(P).
  \end{equation*}
  If $\inf_{P\in A} \ms I(P) = + \infty$, the right-hand side is
  bounded below by $- \inf_{P\in A} \ms I(P)$, and the lower
  bound of Theorem \ref{t:1} is proved. Conversely, assume that
  $\inf_{P\in A} \ms I(P) < + \infty$. In this case, given $\delta>0$,
  let $P^*\in A$ be such that
  $\inf_{P\in A} \ms I(P) \ge \ms I(P^*) -\delta$. By item (iii) in
  Theorem~\ref{t:2.2}, for $\{x_T\}\subset K$ as above there exists a
  sequence $\{P_T\}$ converging to $P^*$ and such that
  $\varlimsup_{T}T^{-1} \ms I^{x_T}_{[0,T]}(P_T)\le \ms I(P^*)$. Since
  $P^*\in A$, $P_T \to P^*$ and $A$ is an open set, $P_T \in A$ for
  $T$ large enough. Therefore,
  \begin{equation*}
    \begin{split}
    & \varliminf_{T\to\infty}
    \varliminf_{\epsilon\to 0}  \inf_{x\in K}\frac \epsilon T
    \log \bb P^\epsilon_x
    \big(R_T\in A\big) \ge
    - \varlimsup_{T\to\infty} \frac 1T \inf_{P\in A} \ms
    I^{x_T}_{[0,T]}(P)
    \\
    &\qquad\qquad
    \ge
    -\varlimsup_{T\to\infty} \frac 1T \ms I^{x_T}_{[0,T]}(P_T)
    \ge - \ms I(P^*) \ge - \inf_{P\in A} \ms I(P) -\delta,
  \end{split}
\end{equation*}
  which, by taking the limit $\delta\to 0$, concludes the proof.
\end{proof}

To prove Theorem~\ref{t:2.2}, we premise a density result on set of
translation invariant probability measures on $D(\bb R; \bb R^n)$.  An
element $P$ in $\ms P_\theta$ is said to be \emph{$S$-holonomic} if
there exists a $S$-periodic path $Y\in C(\bb R; \bb R^n)$ such that
\begin{equation}
  \label{16}
  P = \frac 1S\, \int_0^S \!dt\, \delta_{\theta_t Y},
\end{equation}
where we emphasize that we require $Y$ to satisfy $Y_S=Y_0$.
An element of $\ms P_\theta$ is \emph{holonomic} if it is
$S$-holonomic for some $S>0$; it is \emph{smooth holonomic} when the
path $Y$ in \eqref{16} belongs to $C^1(\bb R; \bb R^n)$.

\begin{lemma}
  \label{t:2.3}
  Fix $P\in \ms P_\theta$ satisfying $\ms I(P) <+\infty$.
  There exist a triangular array
  $\{\alpha^n_i,\, n\in \bb N,\, i=1,\ldots, n\}$ with
  $\alpha^n_{i}\ge 0$, $\sum_i \alpha^n_{i}=1$
  and a triangular array $\{P^n_{i}\,,\,n\in \bb N, i=1,\ldots, n\}$
  of smooth holonomic probability measures such that by setting
  $P^n := \sum_{i} \alpha^n_{i} P^n_{i}$ we have $P^n\to P$ and
  $\ms I (P^n) \to \ms I(P)$.
  \end{lemma}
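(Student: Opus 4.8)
The plan is to approximate $P$ in two stages: first by holonomic measures (averages over Dirac masses along periodic paths), then mollify the paths to obtain smooth ones. The starting point is the ergodic decomposition together with the fact, recalled after \eqref{2.5}, that $\ms I(P)<+\infty$ forces $t\mapsto X_t$ to be absolutely continuous $P$-a.s.\ with $\int_0^1 dP(X)\int_0^1 dt\,|\dot X_t|^2<+\infty$; combined with the quadratic lower bound \eqref{3.3bis} this gives uniform control on both $|\nabla V(X_0)|$ and the path velocity. The key elementary observation is that for any path $Y$ which happens to be $S$-periodic, $R_S(Y)$ is exactly the $S$-holonomic measure in \eqref{16}, so producing holonomic approximants amounts to producing nearly-periodic pieces of typical trajectories of $P$.

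First I would reduce to the case where $P$ is ergodic (under $\{\theta_t\}$): by the ergodic decomposition $P=\int \pi(dQ)\,Q$ with each $Q$ ergodic, and since $\ms I$ is affine, approximating each ergodic component and forming a finite convex combination (this is precisely the role of the triangular array of weights $\alpha^n_i$) reduces the statement to ergodic $P$. For ergodic $P$, I would use the following "sampling" argument: pick a large window length $S$ and, on a path $X$ drawn from $P$, consider the restriction $X|_{[0,S]}$; by Birkhoff's ergodic theorem $R_S(X^S)\to P$ weakly $P$-a.s.\ as $S\to\infty$, and $S^{-1}I_{[0,S]}(X^S)\to \ms I(P)$ along a subsequence. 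The only defect is that $X^S$ has a jump $X_0-X_S$ at the period points, so $R_S(X^S)$ need not be holonomic in the sense required (we insist $Y_S=Y_0$). To repair this, fix a small $\eta>0$ and an interpolation time of length $\eta S$: replace $X|_{[0,(1-\eta)S]}$ followed by a short geodesic-type bridge from $X_{(1-\eta)S}$ back to $X_0$, reparametrised to have total length $S$; this produces a genuinely $S$-periodic continuous path $Y^{S,\eta}$. Since the bridge has length $O(1)$ traversed over time $O(\eta S)$, its contribution to $I_{[0,S]}$ is $O(\eta S + (\eta S)^{-1})$, so after dividing by $S$ and sending first $S\to\infty$ then $\eta\to 0$ the cost of the correction is negligible; meanwhile $R_S(Y^{S,\eta})\to P$ weakly because the modified portion occupies a vanishing fraction of the window. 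The needed a priori bound guaranteeing that the bridge endpoints, hence the bridges, stay in a controlled region and have controlled cost comes from \eqref{3.3bis} together with Assumption~\ref{t:2.0}(i), which makes $|X_{(1-\eta)S}|$ and $|X_0|$ tight.

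Second, having a holonomic $Q=S^{-1}\int_0^S dt\,\delta_{\theta_t Y}$ with $Y\in C(\bb R;\bb R^n)$ $S$-periodic and $\int_0^S|\dot Y_t|^2\,dt<+\infty$, I would mollify: set $Y^\rho := Y * j_\rho$ with $j_\rho$ a standard periodic mollifier on the circle $\bb R/S\bb Z$, so $Y^\rho\in C^1(\bb R;\bb R^n)$ is still $S$-periodic, hence $Q^\rho := S^{-1}\int_0^S dt\,\delta_{\theta_t Y^\rho}$ is smooth holonomic. As $\rho\to 0$ one has $Y^\rho\to Y$ uniformly and $\dot Y^\rho\to \dot Y$ in $L^2([0,S])$, which gives $Q^\rho\to Q$ weakly and, by dominated convergence using the smoothness and boundedness of $b,a^{-1}$ on the (compact) range of the $Y^\rho$, $S^{-1}I_{[0,S]}(Y^\rho)\to S^{-1}I_{[0,S]}(Y)$. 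A diagonal extraction over the three small parameters ($S\to\infty$, $\eta\to 0$, $\rho\to 0$), indexed so the resulting sequence is labelled by a single $n$ with at most $n$ terms in each convex combination, yields the triangular arrays in the statement with $P^n\to P$ and, by lower semicontinuity of $\ms I$ for the $\liminf$ and the explicit upper bounds just described for the $\limsup$, $\ms I(P^n)\to\ms I(P)$.

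The main obstacle I expect is the periodization-with-jump issue: one must close up the sampled path into an honest $S$-periodic loop while (a) keeping the extra Freidlin-Wentzell cost $o(S)$ and (b) not perturbing the empirical measure. Handling (a) and (b) simultaneously is where the a priori estimate \eqref{3.3bis} and the coercivity in Assumption~\ref{t:2.0}(i) are essential, since without tightness of the loop endpoints the bridging cost cannot be controlled uniformly in $S$; the reparametrisation of the bridge over a window of length $\eta S$ (rather than $O(1)$) is the device that makes the $(\eta S)^{-1}$ term die after the $S\to\infty$ limit.
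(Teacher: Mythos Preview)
Your plan matches the paper's proof almost exactly: reduce to ergodic $P$ via the ergodic decomposition and affinity of $\ms I$, then for ergodic $P$ sample a Birkhoff-typical path and close it into a periodic loop, and finally mollify to get smooth holonomic measures, assembling everything by a diagonal argument. The only cosmetic difference is in the bridging: the paper closes the loop over a fixed window $[T-1,T]$ rather than one of length $\eta S$, and your appeal to \eqref{3.3bis} for ``tightness'' of the endpoints should really be phrased as recurrence of the single sampled path to compacts (a consequence of Birkhoff applied to $\mathbf 1_{\{|X_0|>M\}}$), which then lets one choose the periodization times so that the bridge cost is $O(1)$ without the extra parameter $\eta$.
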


\begin{proof}
  We follow the argument in \cite[Thm.~4.10]{bgl}, see also
  \cite{pb} for similar results.

  The proof is achieved, by a diagonal argument, from the following
  claims. Recall that $P\in \ms P_\theta$ is \emph{ergodic} when the
  tail $\sigma$-algebra is $P$-trivial.

 \smallskip
  \noindent\emph{Claim 1.}
  Let $P\in \ms P_\theta$ be such that $\ms I (P)<+\infty$.
  There exist a triangular array
  $\{\alpha^n_i,\, n\in \bb N,\, i=1,\ldots, n\}$ with
  $\alpha^n_{i}\ge 0$, $\sum_{i=1}^n \alpha^n_{i}=1$ and a triangular array
  $\{P^n_{i}\,,\,n\in \bb N, i=1,\ldots, n\}$ of ergodic
  probability measures such that
  $\sum_{i=1}^n \alpha^n_{i} P^n_{i}\to P$ 
  and $\sum_{i=1}^n \alpha^n_{i} \ms I(P^n_{i})\to \ms I(P)$.

  This follows directly from the fact that the ergodic probabilities
  are extremal in $\ms P_\theta$ and $\ms I$ is affine.

  \smallskip
  \noindent\emph{Claim 2.}
  Let $P\in \ms P_\theta$ be ergodic and such that $\ms I (P)<+\infty$.
  Then there exists a sequence $P^n\to P$ such that $\ms I(P^n)\to \ms
  I(P)$ and for each $n$ the probability $P^n$ is holonomic.

  Recalling \eqref{2.3}, to construct the required sequence set
  \begin{equation*}
    \begin{split}
      \ms A_P := \Big\{ X\in D(\bb R;\bb R^n) \colon
      \lim_{T\to \infty}\,
      R_T(X) = P \textrm{ and\,}
      \lim_{T\to \infty}
      \frac 1T I_{[0,T]} (X) =  \ms I(P)
      \Big\}.
    \end{split}
  \end{equation*}
  Since $\ms I(P)<+\infty$ then $I_{[0,1]} \in L_1(dP)$.
  The Birkhoff's ergodic theorem then implies $P(\ms A_P)=1$.
  Pick an element $Y\in \ms A_P$.  By definition, the $T$-holonomic
  probability associated to the $T$-periodization of $Y$ converges to
  $P$ but, in general, its rate function does not since when
  $T$-periodizing paths we may insert jumps.  This issue is easily
  solved by modifying the path $Y$ in the time interval $[T-1,T]$
  in such a way that $Y_T=Y_0$ and $T^{-1} I_{[T-1, T]}(Y)\to 0$.

 \smallskip
  \noindent\emph{Claim 3.}
  Let $P\in \ms P_\theta$ be holonomic and such that
  $\ms I (P)<+\infty$.  Then there exists a sequence of $C^1$
  holonomic probability measures $P_n\in \ms P_\theta$ such that
  $P^n\to P$ and $\ms I(P^n)\to \ms I(P)$.

  The required sequence is constructed by taking the convolution
  $\imath_n * X$ where $\imath_n$ is a smooth approximation of the
  identity and $X$ is the continuous periodic path associated to the
  measure $P$.
  \end{proof}

\begin{proof}[Proof of Theorem~\ref{t:2.2}.] \hfill\break
  \smallskip
  \noindent\emph{Item (i).} 
  By assumption, there exist a finite constant $C_0$ and sequences
  $T_j\to\infty$, $x_j\in K$, and $P_j \in \ms P_\theta$ such that
  $\ms I^{x_j}_{[0,T_j]}(P_j) \le C_0 T_j$. Fix $j$. By definition of
  $\ms I^{x_j}_{[0,T_j]}(P_j)$, there exists $Y\in C([0,T_j];\bb R^n)$
  satisfying $R_{T_j}(Y) = P_j$,
  $I^{x_j}_{[0,T_j]}(Y) \le \ms I^{x_j}_{[0,T_j]}(P_j) + 1$. As the
  rate function is finite, $Y(0)=x_j$.
  By
  \eqref{3.3} and since $V\ge 0$, 
  \begin{equation}
  \label{3.4}
  \begin{aligned}
   \int\! dP_j(X) \, |\nabla V (X_0)|^2  & = \frac{1}{T_j}
    \int_0^{T_j} \!dt |\nabla V (X_t)|^2 \\
    & \le \frac{1}{2T_j} V(x_j) +
    \frac 1{\gamma }
        \Big[ C + \frac 1{T_j} I^{x_j}_{[0,T_j]}(P_j)\Big] .
  \end{aligned}
  \end{equation}
  Since $I^{x_j}_{[0,T_j]}(Y) \le \ms I^{x_j}_{[0,T_j]}(P_j) + 1$ and
  $x_j$ belongs to a compact, the right-hand side is bounded by a finite
  constant, uniformly in $j$.

  The bound on the continuity modulus is somewhat more delicate as the
  $T$-periodization introduces, in general, jumps. On the other hand,
  given $T_1<T_2$ and $P=R_T(Y)$ for some
  $Y\in C([0,T];\bb R^n)$, the $P$ probability of observing a jump in
  the time window $[T_1,T_2]$ is at most $(T_2-T_1)/T$.  For
  $\delta>0$, $T_1<T_2$, and $X\in D(\bb R; \bb R^n)$, introduce the
  continuity modulus 
  \begin{equation*}
    \omega^{\delta}_{[T_1,T_2]} (X) := \sup_{\substack{t,s\in [T_1,T_2]\\
      |t-s|<\delta}} |X_t-X_s|.
  \end{equation*}
  By the Cauchy-Schwarz inequality, if the restriction of $X$ to $[T_1,T_2]$ belongs to
  $H^1([T_1,T_2])$ then 
  \begin{equation*}
    \omega^{\delta}_{[T_1,T_2]} (X)^2 \le
    \delta \int_{T_1}^{T_2}\!dt\, |\dot X|^2.
  \end{equation*}
  In view of the previous observations, if $P=R_T(Y)$ for some $Y$
  satisfying $I^x_{[0,T]}(Y) <+\infty$ for some $x\in K$, from
  Chebyshev inequality we deduce that for each $\zeta>0$
  \begin{equation*}
    \begin{split}
    P\big( \omega^{\delta}_{[T_1,T_2]} >\zeta \big)
    &\le \frac {T_2-T_1}{T} +\frac {(T_2-T_1)\delta}{\zeta^2}
    \frac 1T \int_0^T\!dt\, |\dot Y|^2
    \\
    &\le
    \frac {T_2-T_1}{T} +\frac {(T_2-T_1)\delta}{\gamma \zeta^2}
        \Big[ \frac 1{2T} \sup_{y\in K}V(y)+C
    + \frac 1T \ms I^x_{[0,T]}(P) \Big].
    \end{split}
  \end{equation*}
  where we used \eqref{3.3} in the second step.

  By standard criterion on tightness of probability measures on
  $D(\bb R;\bb R^n)$, see e.g.\ \cite[Thm.~15.5]{bi}, the previous displayed
  bound together with \eqref{3.4} yield the statement.

  \smallskip
  \noindent\emph{Item (ii).} 
  If $\ms I^{x}_{[0,T]}(P) <+\infty$ then there exists
  $Y\in C([0,T];\bb R^n)$ such that $P=R_T(Y)$ and for $T\ge 1$
  \begin{equation*}
    \begin{split}
      \ms I^{x}_{[0,T]} (P)
      & = I^{x}_{[0,T]} (Y) \ge I_{[0,T]} (Y)
      \ge \int_0^{T-1} \!\!dt \, I_{[0,1]}(\theta_{-t} Y)
      \\
      &= (T-1) \int\!d\tilde P(X)\, I_{[0,1]}(X)
    \end{split}
  \end{equation*}
  where we used \eqref{3.1} in the second step and we have set
  \begin{equation}
    \label{tildep}
    \tilde P := \frac{1}{T-1} \int_0^{T-1}\!dt \, \delta_{\theta_t Y^T}
    = \frac T{T-1} \, P - \frac 1{T-1} \,
    \int_{T-1}^T\!dt\, \delta_{\theta_t Y^T}.
  \end{equation}
  Consider now $P\in\ms P_\theta$ and sequences $\{x_T\}$, $P_T\to P$ as in
  the statement.
  By passing to a not relabeled sub-sequence we may
  assume that $P_T=R_T(Y)$ for some $Y=Y(T)\in C([0,T];\bb R^n)$.
  Letting $\tilde P_T$ be defined as in \eqref{tildep}
  we then have $\tilde P_T\to P$ and
  \begin{equation*}
    \begin{split}
          \varliminf_{T\to\infty}
     \frac 1T \ms I^{x_T}_{[0,T]} (P_T)
    &\ge \varliminf_{T\to\infty} \frac{T-1}{T}
    \, \int\!d\tilde P_T(X)\, I_{[0,1]}(X)
    \\
    &\ge \int\!dP(X)\, I_{[0,1]}(X) =\ms I(P)
    \end{split}  \end{equation*}
  where we have used the lower semi-continuity of $I_{[0,1]}$.

  \smallskip
  \noindent\emph{Item (iii).} 
  In view of Lemma~\ref{t:2.3}, it suffices to consider the case in
  which $P$ is smooth holonomic, i.e.
  $P = S^{-1} \int_0^S\!ds\, \delta_{\theta_s Y}$ for some
  $S>0$ and some $S$-periodic path $Y\in C^1(\bb R,\bb R^n)$.
  In particular, $\ms I(P) = S^{-1} I_{[0,S]}(Y)$.

  Given $x,y\in \bb R^n$ let ${\bar Y}^{x,y}\in C([0,1];\bb R^n)$ be
  the affine interpolation between $x$ and $y$, i.e.\
  ${\bar Y}^{x,y}_t = x(1-t)+yt$, $t\in [0,1]$. By a direct computation
  there exist $C(|x|,|y|)>0$ depending on $V,c,a$ such that
  \begin{equation*}
    I_{[0,1]}({\bar Y}^{x,y}) \le C(|x|,|y|).
  \end{equation*}
  For $T>0$ and a sequence $\{x_T\}\subset K$ as in the statement, let
  $\tilde Y\in C([0,+\infty);\bb R^n)$ be the path defined by
  \begin{equation*}
    \tilde Y_t :=
    \begin{cases}
      {\bar Y}^{x_T,Y_0}_t &\textrm{ if $t\in[0,1]$,}
      \\
      Y_{t-1} &\textrm{ if $t>1$,}
    \end{cases}
  \end{equation*}
  and set $P_T:= R_T(\tilde Y)$. Then $P_T\to P$ and for $T\ge 1$
  \begin{equation*}
    \ms I^{x_T}_{[0,T]} (P_T) = I^{x_T}_{[0,T]}(\tilde Y)
    = I_{[0,1]}\big({\bar Y}^{x_T,Y_0}\big)+ I_{[0,T-1]}(Y)
  \end{equation*}
  so that
  \begin{equation*}
    \varlimsup_{T\to \infty} \frac 1T \ms I^{x_T}_{[0,T]} (P_T)
    \le   \varlimsup_{T\to \infty}
    \Big[ \frac 1T \sup_{x\in K} C(|x|,|Y_0|) +  \frac 1T
    I_{[0,T-1]}(Y) \Big] = \ms I(P)
  \end{equation*}
  by the $S$-periodicity of $Y$.
\end{proof}

\section{Small noise limit after large time limit} 
\label{s:3}

By Assumption~\ref{2.1} and standard criteria, see
e.g.\ \cite[Thm.~3.7 and Cor.~4.4]{has}, for each $\epsilon>0$ the process
$\xi^\epsilon$ that solves \eqref{2.2} admits a unique invariant
probability $\pi^\epsilon$.
We denote by $\bb P^\epsilon_{\pi^\epsilon}$ the corresponding
stationary process, that we regard as a probability on
$D(\bb R;\bb R^n)$.  For fixed $\epsilon>0$, the Donsker-Varadhan
theorem \cite{deustr,dv1-4,Var84} states the large deviation principle as
$T\to\infty$ for the family
$\{ \bb P^\epsilon_x\circ R_T^{-1}\}_{T>0}$ with rate function given
by the relative entropy per unit of time with respect to
$\bb P^\epsilon_{\pi^\epsilon}$.

We first introduce such rate function by a variational
representation. For $T>0$, let
$\ms H^{\epsilon}(T, \cdot ) \colon \ms P_\theta \to [0,+\infty]$ be
the functional defined by
\begin{equation}
  \label{hev}
  \ms H^\epsilon (T, P) :=
  \sup_{\Phi} \int\! dP (X)
  \Big[  \Phi (X) - \log \bb E_{X_0}^\epsilon \big( e^{\Phi} \big) \Big],
\end{equation}
where $\bb E_{x}^\epsilon$ denotes the expectation with respect to 
$\bb P_{x}^\epsilon$, $x\in \bb R^n$ and 
the supremum is carried over the bounded and continuous
functions $\Phi$ on $ D(\bb R, \bb R^n)$ that are measurable with
respect to $\sigma\{ X_s, \: s\in[0,T]\big\}$.
Let then $\ms H^{\epsilon} \colon \ms P_\theta \to [0,+\infty]$ be
the functional defined by
\begin{equation} 
\label{20}
\ms H^{\epsilon}(P) := \sup_{T>0} \frac 1T \,
\ms H^\epsilon (T, P) = 
\lim_{T\to\infty} \frac 1T \, \ms H^\epsilon (T,P),
\end{equation}
where the second identity follows from the inequality before
\cite[Thm.~10.9]{Var84}.  By \cite[Thm.s~10.6 and 10.8]{Var84},
the functional $\ms H_{\epsilon}$ is good and affine.

We next characterize $\ms H^\epsilon$ as the relative entropy per unit
of time with respect $\bb P^\epsilon_{\pi^\epsilon}$.  Given
$T_1<T_2$, denote by
$\imath_{T_1,T_2}\colon D(\bb R, \bb R^n) \to D([T_1,T_2], \bb R^n)$
the canonical projection. Given two probability measures $P^1,P^2$,
let $\ms H_{[T_1,T_2]}(\cdot|\cdot)$ be the relative entropy of the
marginal of $P^2$ on the time interval $[T_1,T_2]$ with respect to the
marginal of $P^1$ on the same interval, i.e.,
\begin{equation}
  \label{rem}
  \ms H_{[T_1,T_2]} (P^2 | P^1 ) =
  \textrm{Ent} \big( P^2_{[T_1,T_2]} \big| P^1_{[T_1,T_2]} \big)
  := \int \!dP^2_{[T_1,T_2]} \,
  \log  \frac{dP^2_{[T_1,T_2]}} {dP^1_{[T_1,T_2]} }
\end{equation}
where $P^j_{[T_1,T_2]} = P^j \circ \imath_{T_1,T_2}^{-1}$, $j=1,2$.
By \cite[Thm.~5.4.27]{deustr}, for each $P\in \ms P_\theta$
\begin{equation}
\label{13}
\ms H^\epsilon (P) 
= \lim_{T\to \infty}  \frac 1T
\ms H_{[0,T]}\big( P \big| \bb P^\epsilon_{\pi^\epsilon} \big)
=  \sup_{T>0} \frac 1T\,
\ms H_{[0,T]}\big( P \big| \bb P^\epsilon_{\pi^\epsilon} \big),
\end{equation}
where the second identity follows by a super-additivity argument which
stems from \cite[Lemma 10.3]{Var84}.

Recalling that $\epsilon_0>0$ is the constant appearing in item (i) of
Assumption~\ref{t:2.0}, the large deviation principle in the limit
$T\to \infty$ is then stated as follows.

\begin{lemma}
  \label{t:3.1}
  Fix $\epsilon\in (0,\epsilon_0)$. As $T\to \infty$ the family
  $\big\{ \bb P^\epsilon_x \circ R_T^{-1},\, T>0\big\}$ satisfies,
  uniformly for $x$ in compacts, a large deviation principle with
  speed $T$ and good affine rate function $\ms H^\epsilon$.  Namely,
  for each compact set $K\subset\subset\bb R^n$, each closed set
  $C \subset \ms P_\theta$, and each open set $A\subset \ms P_\theta$
  \begin{equation*}
    \begin{split}
      \varlimsup_{T\to \infty}\; \frac 1T \sup_{x\in K} \log \bb
      P^\epsilon_{x} \big( R_{T} \in C\big) \le -\inf_{P \in C} \ms
      H^\epsilon(P)
      \\
      \varliminf_{T\to \infty}\; \frac 1T \inf_{x\in K} \log \bb
      P^\epsilon_{x} \big( R_{T} \in A\big) \ge -\inf_{P \in A}
      \ms H^\epsilon(P).
    \end{split}
  \end{equation*}
\end{lemma}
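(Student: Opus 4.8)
The plan is to deduce Lemma~\ref{t:3.1} from the classical Donsker--Varadhan level three theorem, the two points needing real work being (a) the verification that the theorem genuinely applies to the diffusion \eqref{2.2} — so that the rate function is the full entropy per unit of time and not a reduced one — and (b) the promotion of the $x$-pointwise statement to one uniform on compacts. The identification of the rate function is essentially already in place: the discussion preceding the statement recalls that, for fixed $\epsilon>0$, the family $\{\bb P^\epsilon_x\circ R_T^{-1}\}$ obeys, as $T\to\infty$, the large deviation principle with rate given by the relative entropy per unit of time with respect to $\bb P^\epsilon_{\pi^\epsilon}$, and \eqref{13} together with \eqref{20} identifies this limit with $\ms H^\epsilon$; goodness and affineness of $\ms H^\epsilon$ were noted after \eqref{20}. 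So only (a) and (b) remain.

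For (a) I would exhibit a Lyapunov function of the form $u=e^{\beta V}$ with $\beta>0$ small. Using $b=-a\nabla V+c$ one computes
\[
  \frac{L_\epsilon u}{u}=\beta\big[-(1-\epsilon\beta)\,\nabla V\cdot a\nabla V
   +\epsilon\,\tr(aD^2V)+c\cdot\nabla V\big].
\]
Since $\epsilon<\epsilon_0$, one may choose $\beta$ so that $\epsilon(1+\epsilon_0\beta)<\epsilon_0$, equivalently $\epsilon/(1-\epsilon\beta)<\epsilon_0$; writing $\nabla V\cdot a\nabla V-\frac{\epsilon}{1-\epsilon\beta}\tr(aD^2V)$ as a convex combination of $\nabla V\cdot a\nabla V$ and $\nabla V\cdot a\nabla V-\epsilon_0\tr(aD^2V)$, and using uniform ellipticity together with $\nabla V\cdot x/|x|\to\infty$ to dominate the bounded drift $c\cdot\nabla V$, item~(i) of Assumption~\ref{t:2.0} makes the bracket, hence $L_\epsilon u/u$, tend to $-\infty$ as $|x|\to\infty$. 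This is precisely the recurrence/tightness condition under which the Donsker--Varadhan estimates (see \cite[Ch.~10]{Var84} and \cite[\S~5.4]{deustr}) yield the full large deviation principle for the periodized empirical process $R_T$ of \eqref{2.3} with the good affine rate function $\ms H^\epsilon$; the same Lyapunov bound supplies the exponential tightness needed to pass from compact to closed sets, and one checks along the way the harmless point that the jumps introduced by the $T$-periodization in \eqref{2.3} are exponentially negligible, so that $R_T$ carries the same principle as the standard stationary empirical measure.

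For (b) I would treat the two bounds separately, in the spirit of the proof of Theorem~\ref{t:1} given above. For the upper bound I would use the upper semi-continuity of $x\mapsto\bb P^\epsilon_x(R_T\in C)$ coming from the Feller property to attain $\sup_{x\in K}$ at some $x_T\in K$, and then observe that the Donsker--Varadhan upper bound estimate depends on the initial point only through the value $u(x)=e^{\beta V(x)}$ of the Lyapunov function, which is bounded on $K$; the estimate is therefore uniform on $K$ and in particular valid along $\{x_T\}$. For the lower bound I would likewise attain $\inf_{x\in K}$ at some $x_T\in K$ by lower semi-continuity, and then, given $\delta>0$ and $P^*\in A$ with $\ms H^\epsilon(P^*)\le\inf_A\ms H^\epsilon+\delta$, prepend to the near-$P^*$ tilting of the stationary dynamics a one–unit–of–time bridge driving the diffusion from $x_T$ into the bulk; by the Gaussian lower bound on the transition density (uniform ellipticity, item~(iii) of Assumption~\ref{t:2.0}) this bridge costs $O(1)$ in relative entropy, uniformly for $x_T\in K$, hence is negligible at speed $T$, while the resulting tilted path measure still forces $R_T\to P^*$. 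Letting $\delta\to0$ then closes the argument.

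I expect the main obstacle to be not the soft Donsker--Varadhan input but, as always in this paper, the non-compactness of $\bb R^n$: one must check that the Lyapunov estimate above is quantitative enough to yield exponential tightness of $\{\bb P^\epsilon_x\circ R_T^{-1}\}$ in $\ms P_\theta$ with constants controlled uniformly for $x$ in a compact — controlling simultaneously the escape of the time-marginals to infinity and the continuity modulus of the paths — and that the $O(1)$ entropy cost of the bridge is genuinely uniform on $K$. This is the analogue, in the present diffusive setting, of the additional estimates carried out in \cite{bgl}, and it is the step where Assumption~\ref{t:2.0} is used in full strength.
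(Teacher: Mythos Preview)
Your approach is correct and, for the Lyapunov part, essentially identical to the paper's: both exhibit $u=\exp\{\beta V\}$ (the paper writes $\beta=\gamma/\epsilon$ with $\gamma\in(0,1)$), compute $L_\epsilon u/u$ exactly as you do, and use Assumption~\ref{t:2.0} to conclude that $-L_\epsilon u/u$ is bounded below and tends to $+\infty$. The difference lies in your step~(b). The paper does not promote pointwise bounds to uniform ones by hand; instead it cites \cite[Thm.s~11.6 and 12.5]{Var84}, which are already stated uniformly for the initial point in compacts once one verifies the Lyapunov conditions (1)--(5) of \cite[p.~34]{Var84} for the upper bound and the transition-density conditions I--II on the same page for the lower bound. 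The latter is dispatched in one line by parabolic regularity of $p^\epsilon(1,x,\cdot)$, whereas your bridge-and-tilting construction, while correct in principle, is unnecessary extra work. In short, your (b) is not wrong but redundant: the uniformity is built into the cited theorems, and all that is actually required is the hypothesis verification you carried out in~(a) together with an appeal to parabolic regularity for the lower bound.
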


\begin{proof}
  The statement follows from \cite[Thm.s~11.6 and 12.5]{Var84}, we
  only need to check that the hypotheses of those theorems are met.

  Regarding the upper bound, given $\gamma\in(0,1)$ set
  \begin{equation}
    \label{ue}
    u_\epsilon(x):=\exp\Big\{ \frac \gamma\epsilon  V(x) \Big\},
    \qquad x\in \bb R^n.
  \end{equation}
  We claim that Assumption~\ref{t:2.0} implies that $u_\epsilon$ meets
  conditions (1)--(5) in \cite[Pag.~34]{Var84} for any
  $\epsilon\in (0,\epsilon_0)$ and a suitable $\gamma\in (0,1)$.
  Indeed, $u_\epsilon\ge 1$ and $u_\epsilon$ is bounded on
  compacts. Moreover, by a direct computation,
  \begin{equation}
    \label{we}
    W_\epsilon := - \frac {L_\epsilon u}{u} =
    \frac \gamma\epsilon\Big[
    (1-\gamma)\nabla V \cdot a\nabla V - c\cdot \nabla V
    - \epsilon \tr(a D^2 V) \Big]
  \end{equation}
  satisfies $\inf_{x} W_\epsilon (x) > - \infty$ and
  $\lim_{|x|\to \infty} W_\epsilon(x) = +\infty$ for $\gamma$ small
  enough.  Even if $u_\epsilon$ does not really belong to the domain
  of the generator $L_\epsilon$, it is straightforward to introduce a
  cutoff function $\phi_n\colon \bb R^n\to (0,+\infty)$ such that
  $u_{\epsilon,n}:= u_\epsilon \, \phi_n$ belongs to the domain of
  $L_\epsilon$ for each $n\in \bb N$ and the sequence
  $\{u_{\epsilon,n},\,n\in \bb N\}$ satisfies conditions (1)--(5) in
  \cite[Pag.~34]{Var84}.

  Regarding the lower bound, denote by $p^\epsilon(t,x,\cdot)$, $t\ge 0$,
  $x\in \bb R^n$ the transition probability of the Markov process
  $\xi^\epsilon$ and by $\alpha$ the Lebesgue measure on $\bb R^n$.
  By standard parabolic regularity, $p^\epsilon(1,x,\cdot)$ satisfies
  conditions I--II in \cite[Pag.~34]{Var84}.
\end{proof}

In view of the argument presented in the previous section, the proof
of Theorem~\ref{t:1} is completed by the variational convergence of
$\epsilon\,\ms H^\epsilon$ to $\ms I$.
As the $x$-dependence has disappeared in the limit $T\to \infty$, the
following statement amounts to the standard $\Gamma$-convergence of
the sequence $\{\epsilon\,\ms H^\epsilon\}$, see e.g.\ \cite{Br},
together with the pre-compactness of sequences $\{P_\epsilon\}$ with
equi-bounded rate function.

\begin{theorem}
  \label{t:3.2} $~$
  \begin{itemize}
  \item [(i)] If a sequence $\{P_\epsilon\}\subset \ms P_\theta$
    satisfies
    $\varliminf_\epsilon \epsilon \,\ms H^\epsilon (P_\epsilon) <
    +\infty$ then it has a pre-compact sub-sequence.
  \item[(ii)] For any $P\in \ms P_\theta$ and any sequence
    $P_\epsilon\to P$
    \begin{equation*}
      \varliminf_{\epsilon\to 0} \epsilon \,\ms H^\epsilon(P_\epsilon) \ge
      \ms I (P).
    \end{equation*}
  \item[(iii)] For any $P\in \ms P_\theta$ there exists a sequence
    $P_\epsilon\to P$ such that
    \begin{equation*}
      \varlimsup_{\epsilon\to 0} \epsilon \,\ms H^\epsilon(P_\epsilon) \le
      \ms I (P).
    \end{equation*}
  \end{itemize}
\end{theorem}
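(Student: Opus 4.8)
The plan is to establish the three items by combining the Donsker--Varadhan variational representation \eqref{hev}--\eqref{20} with the Freidlin--Wentzell structure already exploited in Section~\ref{s:2}. The central observation is that $\ms H^\epsilon$ and $\ms I$ are both affine and good, so by Lemma~\ref{t:2.3} and a diagonal argument it suffices to prove the $\Gamma$-$\varlimsup$ inequality in item (iii) for $P$ smooth holonomic, and to prove items (i)--(ii) using the coercive lower bound on $I_{[0,1]}$ provided by \eqref{3.3}. For the lower bound machinery, the natural route is to compare $\ms H^\epsilon(T,P)$ from below with a suitable choice of test functional $\Phi$ in \eqref{hev}, or alternatively to use the representation \eqref{13} of $\ms H^\epsilon$ as relative entropy per unit time and the chain rule together with the Freidlin--Wentzell lower bound on the one-step transition density.

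For item (iii), given $P = S^{-1}\int_0^S ds\,\delta_{\theta_s Y}$ with $Y\in C^1$ and $S$-periodic, I would take $P_\epsilon := P$ for all $\epsilon$ (or a mild regularization thereof) and estimate $\epsilon\,\ms H^\epsilon(T,P)$ directly. Using \eqref{13}, $\ms H_{[0,T]}(P|\bb P^\epsilon_{\pi^\epsilon})$ can be written via the Markov property as $\mathrm{Ent}(P_0|\pi^\epsilon) + \int dP(X_0)\,\ms H_{[0,T]}^{\mathrm{path}}(\cdots)$, and the path part is, by Girsanov, exactly $\epsilon^{-1}$ times an $I_{[0,T]}$-type action evaluated along the deterministic path $Y$ plus lower-order terms; dividing by $T$ and letting $T\to\infty$ recovers $\ms I(P) = S^{-1}I_{[0,S]}(Y)$ after multiplying by $\epsilon$. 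The subtlety is the term $\mathrm{Ent}(P_0|\pi^\epsilon)$: since $\pi^\epsilon$ concentrates (on the Freidlin--Wentzell scale) on the attractors of the drift, $\epsilon\log(d P_0/d\pi^\epsilon)$ is governed by the quasi-potential, but this is an $O(1)$ contribution that is killed after division by $T\to\infty$, so it does not affect the per-unit-time limit; this is the standard mechanism and should be quotable or quickly verifiable from Assumption~\ref{t:2.0}.

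For item (ii), I would use the variational formula \eqref{hev}: for a fixed bounded continuous $\Phi$ measurable with respect to $\sigma\{X_s : s\in[0,1]\}$, Varadhan's lemma / the Laplace--Varadhan principle for $\bb P^\epsilon_{X_0}$ combined with the Freidlin--Wentzell rate function $I^{X_0}_{[0,1]}$ (Lemma~\ref{t:2.1} with $T=1$) gives $\epsilon\log \bb E^\epsilon_{X_0}(e^{\Phi/\epsilon}) \to \sup_{W}\big[\Phi(W) - I^{X_0}_{[0,1]}(W)\big]$. Hence $\epsilon\,\ms H^\epsilon(1,P_\epsilon)$, with $\Phi$ replaced by $\Phi/\epsilon$, is bounded below by $\int dP_\epsilon(X)\big[\Phi(X) - \sup_W(\Phi(W)-I^{X_0}_{[0,1]}(W))\big] + o(1)$; optimizing over $\Phi$ and passing to the limit $\epsilon\to 0$ along $P_\epsilon\to P$ yields $\varliminf_\epsilon \epsilon\,\ms H^\epsilon(P_\epsilon)\ge \varliminf_\epsilon \epsilon\,\ms H^\epsilon(1,P_\epsilon)\ge \int dP(X)\,I_{[0,1]}(X) = \ms I(P)$, using lower semicontinuity of $I_{[0,1]}$ and the duality between $\ms I^{x}_{[0,1]}$-type functionals and their Legendre transforms. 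One must be careful that the supremum over $\Phi$ and the limits can be exchanged uniformly in $X_0$ over compacts; here the uniformity statement in Lemma~\ref{t:2.1} and the coercivity \eqref{3.3} (which confines the relevant mass to a compact set) are what make this work.

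For item (i), precompactness of $\{P_\epsilon\}$ with $\varliminf_\epsilon \epsilon\,\ms H^\epsilon(P_\epsilon)<\infty$ follows the same template as item (i) of Theorem~\ref{t:2.2}: from the lower bound just established (applied at finite $T$, or directly from \eqref{13}), a bound on $\epsilon\,\ms H^\epsilon(P_\epsilon)$ translates into a uniform bound on $\int dP_\epsilon(X)\big[|\nabla V(X_0)|^2 + \int_{T_1}^{T_2}|\dot X_t|^2\,dt\big]$ via \eqref{3.3} and the super-additivity of the entropy, and then the tightness criteria \cite[Thm.~8.2 and Thm.~15.5]{bi} apply verbatim. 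The main obstacle, as usual in these $\Gamma$-convergence-meets-large-deviations arguments, is controlling the non-compactness of $\bb R^n$: specifically, ensuring that the Laplace principle asymptotics $\epsilon\log\bb E^\epsilon_x(e^{\Phi/\epsilon})\to\sup_W[\Phi(W)-I^x_{[0,1]}(W)]$ hold uniformly for $x$ in compacts and that the invariant measure $\pi^\epsilon$ does not leak mass to infinity in a way that spoils the per-unit-time entropy estimate — both of which are precisely what Assumption~\ref{t:2.0}(i) and the Lyapunov function \eqref{ue} from Lemma~\ref{t:3.1} are designed to handle, so I would lean heavily on those ingredients.
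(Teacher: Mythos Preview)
Your proposal for item (iii) contains a fatal gap. If $P = S^{-1}\int_0^S ds\,\delta_{\theta_s Y}$ is smooth holonomic, then $P$ is supported on a one-parameter family of \emph{deterministic} paths, and for every $T>0$ the marginal $P_{[0,T]}$ is singular with respect to $(\bb P^\epsilon_{\pi^\epsilon})_{[0,T]}$, which is the law of a non-degenerate diffusion. Hence $\ms H_{[0,T]}(P\,|\,\bb P^\epsilon_{\pi^\epsilon})=+\infty$ for every $\epsilon>0$ and every $T>0$, so $\ms H^\epsilon(P)=+\infty$ and the choice $P_\epsilon:=P$ is useless. Your appeal to Girsanov is precisely where this breaks: Girsanov compares two diffusions with the \emph{same} diffusion coefficient, not a deterministic measure with a diffusion. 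The ``mild regularization thereof'' you mention in passing is in fact the entire content of the argument: the paper builds $P_\epsilon$ as the law of the solution to a perturbed SDE \eqref{dhe}--\eqref{tbe} with the same diffusion matrix as \eqref{2.2} but drift engineered so that the process concentrates on $Y$ as $\epsilon\to 0$. Only then is Girsanov applicable, and the entropy computation (Lemma~\ref{t:3.5}) also requires a non-trivial lower bound on the density $\rho^\epsilon$ of $\pi^\epsilon$ via a Phragm\`en--Lindel\"of maximum principle to control the static part $\mathrm{Ent}(\nu^\epsilon|\pi^\epsilon)$.

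Your sketches for items (i) and (ii) are also too loose. For (i), the inequality \eqref{3.3} is a lower bound on the Freidlin--Wentzell functional $I_{[0,T]}$, not on $\epsilon\,\ms H^\epsilon$; there is no direct passage from a bound on $\epsilon\,\ms H^\epsilon(P_\epsilon)$ to $\int dP_\epsilon\,|\nabla V(X_0)|^2$ without first proving exponential moment estimates of the type in Lemma~\ref{t:3.4} (which is what the paper does, via martingale arguments and the Lyapunov structure). The paper instead deduces (i) from the entropy inequality together with exponential tightness of the stationary process (Lemma~\ref{t:3.3}). For (ii), your Laplace--Varadhan route is conceptually different from the paper's: the paper inserts explicit adapted test functions $\Phi_{\delta,\epsilon}$ built from exponential martingales with $\bb E^\epsilon_x(e^{\Phi_{\delta,\epsilon}})=1$, so that the $\log\bb E^\epsilon_{X_0}(e^\Phi)$ term in \eqref{hev} vanishes identically and no Laplace asymptotics are needed. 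Your approach would require (a) uniformity of the Laplace limit in $X_0$ over sets carrying most of the $P_\epsilon$-mass, hence tightness of $\{P_\epsilon\}$ as an input rather than an output, and (b) a minimax argument to exchange $\sup_\Phi$ with $\int dP$ and recover $\int dP\,I_{[0,1]}$ --- neither of which you address.
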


We next prove separately the three statements, each one having
a preliminary lemma.

\begin{lemma}
  \label{t:3.3}
  The sequence $\{\bb P^\epsilon_{\pi^\epsilon}\}\subset \ms P_\theta$
  is exponentially tight, i.e., there exists a sequence of compact
  sets 
  $\ms K_\ell\subset\subset D(\mathbb R, \mathbb R^n) $ such that
  \begin{equation*}
    \lim_{\ell\to\infty} \varlimsup_{\epsilon\to 0} \epsilon\,
    \log \bb P^\epsilon_{\pi^\epsilon} \big( \ms
    K_\ell^\mathrm{c}\big) =-\infty.
  \end{equation*}
\end{lemma}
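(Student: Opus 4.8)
The statement is a claim of exponential tightness for the family of stationary measures $\{\bb P^\epsilon_{\pi^\epsilon}\}$ on $D(\bb R;\bb R^n)$ as $\epsilon\to 0$. The plan is to reduce exponential tightness on the path space to two quantitative estimates: (a) an exponential bound on the probability that the path leaves a large ball at a fixed time, uniformly enough in time, and (b) an exponential bound on the continuity modulus of the path. Both will follow from the Lyapunov structure encoded in Assumption~\ref{t:2.0}, exactly the structure exploited in the proof of Lemma~\ref{t:3.1} via the function $u_\epsilon=\exp\{(\gamma/\epsilon) V\}$.

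\textbf{Step 1: control of the marginal at a fixed time.} First I would estimate $\pi^\epsilon(\{|x|>\ell\})$ on an exponential scale. Since $\pi^\epsilon$ is invariant, $\int L_\epsilon u_\epsilon\, d\pi^\epsilon=0$ (after the usual cutoff regularization as in the proof of Lemma~\ref{t:3.1}), which rewrites as $\int W_\epsilon u_\epsilon\, d\pi^\epsilon=0$ with $W_\epsilon$ as in \eqref{we}. Because $W_\epsilon(x)\to+\infty$ as $|x|\to\infty$ and $\inf_x W_\epsilon(x)>-\infty$, and because $u_\epsilon\ge 1$ grows like $e^{(\gamma/\epsilon)V}$, this identity forces $\pi^\epsilon$ to put exponentially little mass where $V$ is large: quantitatively, for a sublevel set $\{V\le R\}$ one obtains $\epsilon\log \pi^\epsilon(\{V>R\}) \le -c\,R + o(1)$ for a constant $c>0$, uniformly for small $\epsilon$. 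By the coercivity of $V$ from Assumption~\ref{t:2.0}(i), $\{V\le R\}$ is compact, so $\pi^\epsilon$ is exponentially tight on $\bb R^n$. By stationarity, the same bound holds for the one-time marginal of $\bb P^\epsilon_{\pi^\epsilon}$ at every $t\in\bb R$, hence at all rational $t$ simultaneously after the usual union bound over a countable dense set.

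\textbf{Step 2: control of the modulus of continuity.} Next I would bound, on the exponential scale, the oscillation $\omega^\delta_{[-\ell,\ell]}$ of the path. Starting from the SDE \eqref{2.2}, the drift term contributes an oscillation of order $(\text{bound on }b)\cdot\delta$ on the event that the path stays in a ball (handled by Step~1), while the martingale part $\sqrt{2\epsilon}\int \sigma(\xi^\epsilon_s)\,dw_s$ is controlled by an exponential martingale (Bernstein-type) inequality: since $a=\sigma\sigma^\dagger$ is bounded, one gets $\bb P^\epsilon_{\pi^\epsilon}\big(\sup_{|t-s|\le\delta,\, t,s\in[-\ell,\ell]} |\text{mart}_t-\text{mart}_s|>\zeta\big) \le C(\ell)\,\exp\{-c\zeta^2/(\epsilon\delta)\}$. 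Taking $\delta=\delta_m\downarrow 0$ along a sequence and summing gives, for any $\zeta>0$, a sequence $\delta_m\to 0$ with $\varlimsup_{\epsilon\to 0}\epsilon\log\bb P^\epsilon_{\pi^\epsilon}\big(\omega^{\delta_m}_{[-\ell,\ell]}>1/m \text{ for some } m\big)\to -\infty$.

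\textbf{Step 3: assembling.} Combining Steps 1 and 2 with the characterization of relatively compact subsets of $D(\bb R;\bb R^n)$ in the Skorokhod topology (uniform bound on sup norm over compact time windows plus control of the modulus; see e.g.\ \cite[Thm.~15.5 and its Corollary]{bi} adapted to $D(\bb R;\bb R^n)$ via an exhaustion by intervals $[-\ell,\ell]$), I would define $\ms K_\ell$ as the set of paths whose restriction to each $[-p,p]$, $p\le\ell$, stays in a ball of radius $\rho_{\ell,p}$ and has modulus of continuity controlled by the sequence $(\delta_m,1/m)$ chosen in Step~2 with parameters tuned so that the complementary probability is $\le e^{-\ell/\epsilon}$ for small $\epsilon$. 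A diagonal choice of the radii and moduli over the countably many constraints yields $\ms K_\ell$ compact and $\varlimsup_{\epsilon\to 0}\epsilon\log \bb P^\epsilon_{\pi^\epsilon}(\ms K_\ell^{\mathrm c})\le -\ell$, which gives the claim upon $\ell\to\infty$.

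\textbf{Main obstacle.} The delicate point is Step~1: extracting a genuinely $\epsilon$-uniform exponential tail for $\pi^\epsilon$ from the invariance identity $\int W_\epsilon u_\epsilon\,d\pi^\epsilon=0$. One must ensure the cutoff regularization $u_{\epsilon,n}=u_\epsilon\phi_n$ does not destroy the estimate (the same $\phi_n$ as in Lemma~\ref{t:3.1} works, passing $n\to\infty$ by monotone convergence), and one must track that the constant $c$ in $\epsilon\log\pi^\epsilon(\{V>R\})\le -cR+o(1)$ is independent of $\epsilon$ and of $R$; this hinges precisely on the coercivity condition $\nabla V\cdot a\nabla V-\epsilon_0\tr(aD^2V)\to+\infty$ in Assumption~\ref{t:2.0}(i), which is what makes $W_\epsilon$ grow at a rate comparable to $V$ itself (up to the $\gamma/\epsilon$ prefactor) rather than merely tend to infinity. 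Step~2 is comparatively routine given standard exponential martingale inequalities, and Step~3 is bookkeeping with the Skorokhod compactness criterion.
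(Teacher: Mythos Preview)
Your two-step skeleton (exponential tightness of $\pi^\epsilon$ on $\bb R^n$, then path-level tightness) is exactly the paper's, but the execution differs in both steps. For Step~1 the paper does not use the static identity $\int W_\epsilon u_\epsilon\,d\pi^\epsilon=0$; instead it builds a truncated Lyapunov function $u_\epsilon^R$ (equal to $u_\epsilon$ for $|x|\ge R+1$ and to $1$ for $|x|\le R$), gets a differential inequality $L_\epsilon u_\epsilon^R\le -\alpha u_\epsilon^R$, bounds $\sup_t\bb E_0^\epsilon(u_\epsilon^R(X_t))$ by Gronwall, and passes to $\pi^\epsilon$ via ergodicity $\pi^\epsilon=\lim_T T^{-1}\int_0^T\bb P_0^\epsilon(X_t\in\cdot)\,dt$; Chebyshev finishes. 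Your route can be pushed through, but note that Assumption~\ref{t:2.0}(i) does \emph{not} make $W_\epsilon$ grow ``at a rate comparable to $V$''---it only gives $W_\epsilon\to+\infty$. What actually produces the tail bound is the factor $u_\epsilon=e^{(\gamma/\epsilon)V}$ together with the coercivity of $V$, not any growth comparison between $W_\epsilon$ and $V$. For Step~2 the paper is much shorter: it simply remarks that the Freidlin-Wentzell LDP (with unbounded drift handled by \cite{az}) already yields exponential tightness of $\{\bb P^\epsilon_x\}$ on $C([0,T];\bb R^n)$ uniformly for $x$ in compacts, and combines this with Step~1 via $\bb P^\epsilon_{\pi^\epsilon}=\int\bb P^\epsilon_x\,d\pi^\epsilon(x)$.

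There is one soft gap in your Step~2 worth flagging: $b=-a\nabla V+c$ is unbounded, so the drift contribution to the oscillation is not ``(bound on $b$)$\cdot\delta$''. You condition on the path staying in a ball, but Step~1 as written controls only one-time marginals, not $\sup_{t\in[-\ell,\ell]}|X_t|$, so the argument is circular as stated. The standard repair is to localize via $\tau_L=\inf\{t:|X_t|>L\}$ and bound $\bb P^\epsilon_{\pi^\epsilon}(\tau_L\le T)$ by a Lyapunov/supermartingale estimate---which is precisely what the Freidlin-Wentzell exponential tightness already encapsulates, and why the paper just invokes it.
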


\begin{proof}
  We first show that $\{\pi^\epsilon\}$ is an exponentially tight
  family of probabilities on $\bb R^n$. Observe that, by ergodicity,
  $\pi^\epsilon =\lim_{T\to \infty} T^{-1}\int_0^T\!dt\,
  \bb P_0^\epsilon(X_t\in \cdot)$. 
  Recalling \eqref{ue}, for $R>0$ let $u_\epsilon^R\colon \bb R^n \to
  [1,+\infty)$ be a smooth function such that 
  \begin{equation*}
    u_\epsilon^R(x) :=
    \begin{cases}
      u_\epsilon(x) &\textrm{if $|x|\ge R+1$,}
      \\
      1   &\textrm{if $|x|\le R$.}
    \end{cases}
  \end{equation*}
  In view of Assumption~\ref{t:2.0} and \eqref{we}, there are
  $R,\alpha>0$ such that for any $\epsilon$ small enough
  $L_\epsilon u_\epsilon^R \le -\alpha u_\epsilon^R$ so that
  \begin{equation*}
    \bb E_0^\epsilon \big(  u_\epsilon^R(X_t) \big)
    \le 1 -\alpha \int_0^t\!ds\, \bb E_0^\epsilon \big(
    u_\epsilon^R(X_s) \big).  
  \end{equation*}
  Whence, by Gronwall's lemma,
  $\sup_{t} \bb E^\epsilon_0\big(u_\epsilon^R(t) \big)\le 1$. 
  By changing the value of the parameter $\gamma\in (0,1)$ in
  \eqref{ue}, this bound provides the uniform integrability of
  $u_\epsilon^R$ with respect to 
  $\big\{T^{-1}\int_0^T\!dt\, \bb P_0^\epsilon(X_t\in \cdot)\big\}_{T>0}$.
  Therefore, by ergodicity,
  \begin{equation*}
    \int\! d\pi^\epsilon(x)\, u_\epsilon^R(x)  =
    \lim_{T\to\infty} \frac 1T \int_0^T\!dt \,
    \bb E_0^\epsilon\big(  u_\epsilon^R(X_t) \big) \le 1 
  \end{equation*}
  which, by Chebyshev inequality, yields the exponential tightness
  of $\{\pi^\epsilon\}$.

  We now observe that the Freidlin-Wentzell asymptotics implies that
  for each $T>0$ the family $\{\bb P^\epsilon_x\}_{\epsilon>0}$ is
  exponentially tight on $C([0,T];\bb R^n)$ uniformly for $x$ in
  compacts.  Since
  $\bb P^\epsilon_{\pi^\epsilon} =\int \! d\pi^\epsilon(x)\, \bb
  P^\epsilon_x$, the statement follows.
\end{proof}

\begin{proof}[Proof of Theorem~\ref{t:3.2}, item (i).]
  Fix $T_1<T_2$.  By the basic entropy inequality, see e.g.\
  \cite[Prop.~A1.8.2]{kl}, and \eqref{13}, for any
  $P\in\ms P_\theta$ and any event $B$ on $D([T_1,T_2];\bb R^n)$
    \begin{equation*}
    P(B) \le \frac {\log 2 + (T_2-T_1) \, \ms H^\epsilon(P) }
    {\log \Big(1 + \big[ \bb P^\epsilon_{\pi^\epsilon} ( B)\big]^{-1}
      \Big)}.
  \end{equation*}
  The statement now follows from Lemma~\ref{t:3.3}.
\end{proof}

As just proven, sequences $\{P_\epsilon\}$ with equi-bounded rate
function admit cluster points. We next show they enjoy some
regularity.

\begin{lemma}
  \label{t:3.4}
  There is a constant $C>0$ depending on $V,c,a$ such that the following
  holds. 
  If $\{P_\epsilon\}\subset \ms P_\theta$ is a sequence converging to
  $P$ then  for any $T_1<T_2$
  \begin{equation*}
    \int\!dP(X) \bigg[ \big| \nabla V (X_0) \big|^2 + \frac 1{T_2-T_1}
    \int_{T_1}^{T_2}\!dt\, |\dot X_t|^2 \bigg]
    \le C \Big[1 + \varliminf_{\epsilon\to 0} \epsilon \,
    \ms  H^\epsilon(P_\epsilon) \Big].
  \end{equation*}
\end{lemma}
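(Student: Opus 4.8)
The plan is to transfer a quantitative bound on the Freidlin-Wentzell functional $I_{[0,T]}$ through the identification \eqref{13} of $\ms H^\epsilon$ as relative entropy per unit of time. The starting observation is that a relative entropy bound dominates integrals of any function against $P$, provided that function has an exponential moment under the reference measure $\bb P^\epsilon_{\pi^\epsilon}$. Concretely, fix $T_1<T_2$ and a bounded continuous functional $F$ on $D([T_1,T_2];\bb R^n)$. The entropy inequality gives, for any $\lambda>0$,
\begin{equation*}
  \int\!dP(X)\, F(X) \le \frac{1}{\lambda}\Big[ (T_2-T_1)\,\ms H^\epsilon(P) + \log \bb E^\epsilon_{\pi^\epsilon}\big( e^{\lambda F} \big) \Big],
\end{equation*}
where I have used $\ms H_{[T_1,T_2]}(P|\bb P^\epsilon_{\pi^\epsilon}) \le (T_2-T_1)\,\ms H^\epsilon(P)$, which follows from \eqref{13} together with the super-additivity of $T\mapsto \ms H_{[0,T]}$ and translation invariance of both measures. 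Multiplying by $\epsilon$ and choosing $\lambda = \lambda_\epsilon$ of order $1/\epsilon$, the first term becomes $\epsilon^2(T_2-T_1)\lambda^{-1}_\epsilon \ms H^\epsilon(P)$ times a constant, i.e. of order $\epsilon\,\ms H^\epsilon(P)$, and I am left to control $\epsilon \log \bb E^\epsilon_{\pi^\epsilon}\big( e^{(1/\epsilon) F}\big)$ uniformly in $\epsilon$.

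The natural choice is to take $F$ to be (a truncation of) the quadratic functional appearing in the claim, namely $F(X) = \delta\big[ |\nabla V(X_{T_1})|^2 + (T_2-T_1)^{-1}\int_{T_1}^{T_2}\!dt\,|\dot X_t|^2\big]$ for a small constant $\delta>0$; since this is not bounded or continuous one first works with a smooth bounded truncation $F_M = F\wedge M$ and lets $M\to\infty$ at the end by monotone convergence on the left side. The key input is then an exponential estimate of the form
\begin{equation*}
  \varlimsup_{\epsilon\to 0}\; \epsilon \log \bb E^\epsilon_{\pi^\epsilon}\Big( \exp\Big\{ \frac{\delta}{\epsilon}\Big[ |\nabla V(X_0)|^2 + \int_0^{T_2-T_1}\!dt\, |\dot X_t|^2 \Big]\Big\}\Big) < +\infty
\end{equation*}
for $\delta$ small enough depending only on $V,c,a$. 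This is precisely where Assumption~\ref{t:2.0} enters: expanding the square in the density of $\bb P^\epsilon$ with respect to the Wiener measure (Girsanov), the exponent $\tfrac{\delta}{\epsilon}\int |\dot X_t|^2$ is absorbed by the Gaussian weight $-\tfrac 1{4\epsilon}\int|\dot X_t - b(X_t)|^2 a^{-1}$ as long as $\delta$ is small relative to the ellipticity constant, leaving a drift-dependent remainder controlled by the growth condition on $V$ in Assumption~\ref{t:2.0}(i); the boundary term $|\nabla V(X_0)|^2$ is handled using the exponential tightness of $\{\pi^\epsilon\}$ established in Lemma~\ref{t:3.3} (more precisely, the bound $\int d\pi^\epsilon(x)\,\exp\{(\gamma/\epsilon)V(x)\}\le$ const proven there), combined with $|\nabla V|^2 \lesssim 1 + \nabla V\cdot a\nabla V$ and the super-exponential estimate of Freidlin-Wentzell theory controlling the probability that $X$ strays far from $X_0$ on the fixed window $[0,T_2-T_1]$.

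Assembling these pieces: from the displayed inequality with $F=F_M$ and $\lambda_\epsilon = c/\epsilon$ for a suitable constant $c$, taking $\varlimsup_{\epsilon\to 0}$ along the subsequence realizing the liminf of $\epsilon\,\ms H^\epsilon(P_\epsilon)$, using that $P_\epsilon\to P$ and $F_M$ is bounded and continuous (so $\int dP_\epsilon\,F_M\to\int dP\,F_M$), and finally letting $M\to\infty$, yields $\int dP\, F \le C[1 + \varliminf_\epsilon \epsilon\,\ms H^\epsilon(P_\epsilon)]$, which is the assertion after dividing by $\delta$ and absorbing constants. The main obstacle is the exponential moment estimate: one must verify carefully that the constant $\delta$ can be chosen independently of $T_1,T_2$ (using translation invariance to reduce to $T_1=0$ and then stationarity of $\bb P^\epsilon_{\pi^\epsilon}$, splitting $[0,T_2-T_1]$ into unit windows and using a Hölder/sub-multiplicativity argument to reduce to the unit interval $[0,1]$), and that the remainder terms from the Girsanov expansion genuinely have a uniformly bounded $\epsilon$-exponential moment — this is exactly the role of the second limit condition in Assumption~\ref{t:2.0}(i), which was tailored to make $W_\epsilon$ in \eqref{we} bounded below and coercive.
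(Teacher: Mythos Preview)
There is a genuine gap in your treatment of the derivative term. Under $\bb P^\epsilon_{\pi^\epsilon}$ the sample paths are nowhere differentiable, so the functional $\int_{T_1}^{T_2}|\dot X_t|^2\,dt$ (interpreted as the $H_1$ seminorm, set to $+\infty$ outside $H_1$) equals $+\infty$ almost surely. Consequently your truncation satisfies $F_M=M$ $\bb P^\epsilon_{\pi^\epsilon}$-a.s., the exponential moment is simply $e^{\lambda_\epsilon M}$, and $\epsilon\log\bb E^\epsilon_{\pi^\epsilon}(e^{\lambda_\epsilon F_M})$ equals a constant times $M$, which diverges as $M\to\infty$. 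Your appeal to a ``Gaussian weight $-\tfrac{1}{4\epsilon}\int|\dot X_t-b(X_t)|^2_{a^{-1}}$'' in the Girsanov density is only formal: the actual Radon--Nikodym derivative of $\bb P^\epsilon_x$ with respect to the driving noise involves the It\^o integral $\int a^{-1}b\cdot dX$ and the running cost $\int b\cdot a^{-1}b\,dt$, never a quadratic in $\dot X$, so there is no mechanism to absorb $\tfrac{\delta}{\epsilon}\int|\dot X_t|^2\,dt$.

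The paper avoids this by working in the dual formulation. For $v\in C^1([0,T];\bb R^n)$ with support in $(0,T)$ the pairing $\int_0^T\dot v_t\cdot X_t\,dt$ is a bounded continuous functional under $\bb P^\epsilon_{\pi^\epsilon}$, and an exponential-martingale argument (with $M^\lambda_t=-\tfrac{2\lambda}{\epsilon}\int_0^t v_s\cdot(dX_s-b(X_s)ds)$) yields the uniform bound
\[
\varlimsup_{\epsilon\to 0}\,\epsilon\log\bb E^\epsilon_{\pi^\epsilon}\Big(\exp\Big\{\tfrac{\gamma_1}{\epsilon}\int_0^T\!\big[\dot v_t\cdot X_t-\gamma_2|v_t|^2\big]\,dt\Big\}\Big)\le C(1+T).
\]
The entropy inequality then transfers this to $P_\epsilon$; after passing to the limit $P_\epsilon\to P$ and optimizing over a countable dense family $\{v^k\}\subset L^2((0,T);\bb R^n)$ one recovers $P$-a.s.\ membership in $H_1$ together with the stated bound on $\int\!dP\int|\dot X_t|^2\,dt$. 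The $|\nabla V(X_0)|^2$ term is handled with the same entropy-inequality scheme but a different choice of observable: by translation invariance $\int\!dP\,|\nabla V(X_0)|^2=\int\!dP\,\tfrac1T\int_0^T|\nabla V(X_t)|^2\,dt$, and an It\^o-formula computation for $V(X_t)$ furnishes the exponential moment of the time-integrated quantity under $\bb P^\epsilon_{\pi^\epsilon}$. Note that the direct bound on $\int\!d\pi^\epsilon\,e^{(\delta/\epsilon)|\nabla V|^2}$ you invoke is not delivered by Lemma~\ref{t:3.3}, which only controls $\int\!d\pi^\epsilon\,e^{(\gamma/\epsilon)V}$.
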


\begin{proof}
  In order to obtain the estimate on $\int\!dP(X)\, |\nabla V(X_0)|^2$,
  we first prove the following bound. There are constants
  $\gamma, C>0$ such that for any $T>0$
  \begin{equation}
    \label{b1pi}
    \varlimsup_{\epsilon\to 0} \epsilon \log
    \bb E^\epsilon_{\pi^\epsilon}
    \Big(\exp\Big\{ \frac\gamma\epsilon \int_0^T\!\!dt\,
    \nabla V(X_t) \cdot a(X_t)\nabla V(X_t) \Big\} \Big) \le C (1+T).
  \end{equation}
  For $\lambda\in (0,1)$ to be chosen later, let $M^\lambda$ be the
  $\bb P^\epsilon_x$ martingale given by
  \begin{equation*}
    M^\lambda_t := \frac \lambda\epsilon \int_0^t\nabla
    V(X_s)\cdot \big(dX_s-b(X_s)ds\big)
  \end{equation*}
  where we understand the It\^o integral.
  Its quadratic variation is
  \begin{equation*}
    \langle M^\lambda\rangle_t := \frac {2\lambda^2}\epsilon
    \int_0^t\!ds\,
    \nabla V(X_s)\cdot a(X_s)\nabla V(X_s).
  \end{equation*}
  Setting $\Phi^\lambda_T:= M^\lambda_T -(1/2)  \langle
  M^\lambda\rangle_T$ and recalling that $b=-a\nabla V +c$,
  from It\^o's formula we get
  \begin{equation*}
    \begin{split}
      \Phi^\lambda_T &
      = \frac \lambda\epsilon\bigg\{ V(X_T)-V(X_0) +
      \int_0^T\!dt\, \Big[ (1-\lambda) \nabla V (X_t)\cdot
      a(X_t)\nabla V(X_t)
      \\
      &\phantom{\frac\lambda\epsilon\Big\{ V(X_T)-V(X_0) + \int_0^T}
      -\epsilon\tr\big(a(X_t) D^2 V(X_t)\big) -\nabla V(X_t) \cdot
      c(X_t) \Big]\bigg\}
    \end{split}
  \end{equation*}
  Assumption~\ref{t:2.0} implies that for each
  $\sigma\in (0,1-\lambda)$ there is a constant $C_\sigma$ such that
  for any $\epsilon$ small enough
  \begin{equation*}
    \Phi^\lambda_T \ge
    \frac \lambda\epsilon\Big\{
    -V(X_0) - C_\sigma T + (1-\lambda-\sigma) \int_0^T\!dt\,
    \nabla V (X_t)\cdot a(X_t)\nabla V(X_t) \Big\}.
  \end{equation*}
  Hence, setting $\gamma:= \lambda\, (1-\lambda-\sigma) / 2$,
  \begin{equation*}
    \frac \gamma\epsilon
    \int_0^T\!dt\, \nabla V (X_t)\cdot a(X_t)\nabla V(X_t)
    \le
    \frac 12 \Phi^\lambda_T
    + \frac \lambda{2\epsilon} \big[ C_\sigma T +V(X_0) \big]
  \end{equation*}
  so that, by Cauchy-Schwarz, 
  \begin{equation*}
    \Big[ \bb E^\epsilon_{\pi^\epsilon} \Big(
    e^{  \frac \gamma\epsilon
      \int_0^T\!dt\, \nabla V (X_t)\cdot a(X_t)\nabla V(X_t) }\Big)
    \Big]^{2}
    \le e^{\frac{\lambda C_\sigma T}\epsilon}
    \,
    \bb E^\epsilon_{\pi^\epsilon} \big(e^{\Phi^\lambda_T}
    \big)
    \int\!d\pi^\epsilon\, e^{\frac\lambda\epsilon V}.
  \end{equation*}
  We deduce the bound \eqref{b1pi} by observing that
  $\bb E^\epsilon_{\pi^\epsilon} \big(e^{\Phi^\lambda_T} \big)=1$ and,
  as follows from the proof of Lemma~\ref{t:3.3}, that there exists
  $\lambda\in (0,1)$ for which
  \begin{equation*}
    \varlimsup_{\epsilon\to 0} \epsilon
    \log \int\!d\pi^\epsilon\, e^{\frac \lambda\epsilon V } < +\infty.
  \end{equation*}

  By the variational characterization of the
  relative entropy, for any $P_\epsilon\in \ms P_\theta$
  \begin{equation*}
    \begin{split}
      &\int\!dP_\epsilon(X)\, \int_0^T\!dt\, \nabla V (X_t)\cdot
      a(X_t)\nabla V(X_t)
      \\
      &\qquad \le \frac \epsilon\gamma \log \bb
      E^\epsilon_{\pi^\epsilon}\Big( e^{ \frac \gamma\epsilon
        \int_0^T\!dt\, \nabla V (X_t)\cdot a(X_t)\nabla V(X_t)} \Big)
      + \frac \epsilon\gamma \ms H_{[0,T]}\big(P_\epsilon\big|\bb
      P^\epsilon_{\pi^\epsilon}\big).
    \end{split}
  \end{equation*}
  If $P_\epsilon\to P$, by the translation invariance of $P$, Fatou's
  lemma, the previous bound, \eqref{13} and \eqref{b1pi},
  \begin{equation*}
    \begin{split}
      & \int\! dP(X) \, \nabla V(X_0)\cdot a(X_0) \nabla V(X_0)
      \\
      &\qquad =
      \int\! dP(X) \,\frac 1T \int_0^T\!dt\, \nabla V(X_t)\cdot a(X_t)
      \nabla V(X_t)
      \\
      &\qquad
      \le \varliminf_{\epsilon\to 0} \int\! dP_\epsilon(X) \, \frac
      1T\int_0^T\!dt\, \nabla V(X_t)\cdot a(X_t) \nabla V(X_t)
      \\
      &\qquad
      \le \frac {C}{\gamma} \big( 1 +\frac 1T\big) +\frac 1\gamma
      \varliminf_{\epsilon\to 0} \epsilon\, \ms
      H^\epsilon(P_\epsilon) \;.
    \end{split}
    \end{equation*}
    As the left-hand side does not depend on $\epsilon$, we may choose
    at the beginning a sequence $\epsilon_k$ which achieves the
    $\liminf$ on the right-hand side.  Since $a$ is uniformly
    elliptic, the first assertion of the Lemma is proved.

  In order to obtain the estimate on the derivative, we next prove the
  following bound. There are constants $\gamma_1,\gamma_2, C>0$ such
  that for any $T>0$ and any $v\in C^1([0,T];\bb R^n)$ with support in
  $(0,T)$
  \begin{equation}
    \label{b2pi}
    \varlimsup_{\epsilon\to 0} \epsilon \log
    \bb E^\epsilon_{\pi^\epsilon}
    \bigg(\exp\Big\{ \frac{\gamma_1}\epsilon
    \int_0^T\!\!dt\big[ \dot v_t \cdot X_t - \gamma_2 |v_t|^2 \big]
    \Big\} \bigg) \le C (1+T).
  \end{equation}
  For $\lambda>0$ to be chosen later, let $M^\lambda$ be the
  $\bb P^\epsilon_x$ martingale given by
  \begin{equation*}
    M^\lambda_t := - \frac {2\lambda}\epsilon
    \int_0^tv_s \cdot \big(dX_s-b(X_s)ds\big)
  \end{equation*}
  whose quadratic variation is
  \begin{equation*}
    \langle M^\lambda\rangle_t := \frac {8\lambda^2}\epsilon
    \int_0^t\!ds\,
    v_s\cdot a(X_s) v_s.
  \end{equation*}
  Set $\Phi^\lambda_T:= M^\lambda_T -(1/2) \langle M^\lambda\rangle_T$
  and recall $v_0=v_T=0$. Integrating by parts and using
  Assumption~\ref{t:2.0} we deduce there are constants $\gamma_2, C>0$
  such that 
  \begin{equation*}
    \frac \lambda\epsilon 
    \int_0^T\!\!dt\big[ \dot v_t \cdot X_t - \gamma_2 |v_t|^2 \big]
    \le \frac 12 \Phi^\lambda_T +\frac {C\lambda}{2\epsilon}
    \Big\{ T +  \int_0^T\!dt\, \nabla V (X_t)\cdot a(X_t)\nabla V(X_t)
    \Big\}.
  \end{equation*}
  By choosing $\lambda$ small enough and using
  $\bb E^\epsilon_{\pi^\epsilon} \big(e^{\Phi^\lambda_T} \big)=1$
  together with \eqref{b1pi} we thus achieve the proof of
  \eqref{b2pi} by Cauchy-Schwarz.

  Pick a family $\{v^k\}$ of paths in $C^{1}((0,T); \bb R^n)$ with
  compact support and dense in $L^2((0,T); \bb R^n)$.  Assume that
  $v^1=0$.  In view of \eqref{b2pi}, the variational characterization
  of the relative entropy, and a classical argument which allows to
  bound a maximum over a finite set in exponential estimates, there
  exists a constant $C>0$ such that for any $N\in \bb N$,
  \begin{equation*}
    \begin{split}
    & \varliminf_{\epsilon\to 0}
    \int\!d P_\epsilon(X)
    \max_{k\in\{1,\ldots,N\}}
    \int_0^T\!\!dt
    \big[ \dot v_t^k \cdot X_t - \gamma_2 \big|v_t^k\big|^2 \big]
    \\
    &\qquad \le C( 1 +T)
    \Big[1 +   \varliminf_{\epsilon\to 0} \epsilon \ms
    H^\epsilon(P_\epsilon) \Big].
  \end{split}
\end{equation*}
  Since $P_\epsilon\to P$ and $v^1=0$, from Fatou's lemma we deduce
  \begin{equation*}
    \int\!d P(X)\max_{k\in\{1,\ldots,N\}}
    \int_0^T\!\!dt
    \big[ \dot v_t^k \cdot X_t - \gamma_2 \big|v_t^k\big|^2 \big]
    \le C( 1 +T)
    \Big[1 +   \varliminf_{\epsilon\to 0} \epsilon \ms
    H^\epsilon(P_\epsilon) \Big]
  \end{equation*}
  whence, by monotone convergence,
  \begin{equation*}
    \int\!d P(X)
    \sup_{k\in\bb N} \int_0^T\!\!dt
    \big[ \dot v_t^k \cdot X_t - \gamma_2 \big|v_t^k\big|^2 \big]
    \le C_0( 1 +T)
    \Big[1 +   \varliminf_{\epsilon\to 0} \epsilon \ms
    H^\epsilon(P_\epsilon) \Big].
  \end{equation*}
  Since the family $\{v^k\}$ is dense in $L^2((0,T);\bb R^n)$ this
  estimate implies that $P$-a.s. $X$ belongs to $H_1([0,T])$ and, by the
  translation invariance of $P$, the second part of the bound in the
  statement. 
\end{proof}

\begin{proof}[Proof of Theorem~\ref{t:3.2}, item (ii).]
  For $\delta>0$ let $\imath_\delta$ be a smooth probability
  density on $\bb R$ with support contained in $(0,\delta)$. For
  $X\in D(\bb R; \bb R^n)$ let $\imath_\delta*X \in C^\infty(\bb R;\bb
  R^n)$ be defined by
  \begin{equation*}
    (\imath_\delta*X)_t := \int\!ds\,
    \imath_\delta(t-s) X_s,
  \end{equation*}
  where, by the support property of $\imath_\delta$, we can restrict
  the integral to $(t-\delta,t)$. In particular,
  $\frac d{dt} {\imath_\delta * X} =\imath_\delta'* X$.
  Given $w\in C\big(\bb R^n\times \bb R^n;\bb R^n)$ bounded, let
  $W_\delta$ be the $\bb R^n$-valued function on
  $\bb R\times D(\bb R;\bb R^n)$ defined by
  \begin{equation*}
    W_\delta(t,X) := \chi_\delta (t) \,
    w \big((\imath_\delta* X)_t, (\imath'_\delta* X)_t\big),
  \end{equation*}
  where $\chi_\delta\colon \bb R \to [0,1]$ is a smooth function
  satisfying $\chi_\delta(t)=0$ for $t\le \delta$
  and $\chi_\delta(t)=1$ for $t\ge 2\delta$.
  Note that, by construction, $W_\delta(t,\cdot)$ is continuous,
  bounded, and measurable with respect to the $\sigma$-algebra
  generated by $\{X_s,\, s\in[0,t]\}$.

  Consider now the $\bb P^\epsilon_x$--martingale
  $M^{\delta,\epsilon}$ defined by 
  \begin{equation*}
    M^{\delta,\epsilon}_t := \frac 1\epsilon
    \int_0^t W_\delta(s,X) \cdot \big( d X_s - b(X_s) ds\big)
  \end{equation*}
  whose  quadratic variation is
  \begin{equation*}
    \langle M^{\delta,\epsilon} \rangle_t =
    \frac 2\epsilon \int_0^t\!ds\, W_\delta(s,X)\cdot a(X_s) W_\delta(s,X).
  \end{equation*}
  Let finally $\Phi_{\delta,\epsilon}\colon D(\bb R;\bb R^n)\to \bb R$ be
  the $\sigma\{X_s,\, s\in [0,1]\}$ measurable function defined by 
  \begin{equation*}
    \Phi_{\delta,\epsilon} := M^{\delta,\epsilon}_1 -\frac 12 \langle
    M^{\delta,\epsilon} \rangle_1
  \end{equation*}
  and observe that
  $\bb E^\epsilon_x\big(e^{\Phi_{\delta,\epsilon}} \big)=1$, $x\in \bb
  R^n$.

  Even if $\Phi_{\delta,\epsilon}$ is neither continuous nor bounded,
  by a truncation procedure whose details are omitted, see e.g.\
  \cite[Lemma~6.2]{Var84} for a similar argument, we can take
  $\Phi=\Phi_{\delta,\epsilon}$ in the variational representation
  \eqref{hev}. If $\{P_\epsilon\}\subset \ms P_\theta$ is a sequence
  converging to $P$, by
  \eqref{20} and the regularity of $P$ in Lemma~\ref{t:3.4}
  we deduce that
  \begin{align*}
    &\varliminf_{\epsilon\to 0} \epsilon\, \ms H^\epsilon(P_\epsilon)
    \\
    &\;\ge \int\!\!dP(X)
    \int_0^1\!\!dt\Big[
    W_\delta(t,X) \cdot \big(\dot{X}_t - b(X_t)\big)
    - W_\delta(t,X) \cdot a(X_t) W_\delta(t,X) \Big].
  \end{align*}
  In view of Lemma~\ref{t:3.4} and dominated convergence, we can take
  the limit as $\delta\to 0$ inside the integrals on the right hand side
  above. We thus infer that for any bounded
  $w\in C(\bb R^n\times \bb R^n;\bb R^n)$
  \begin{align*}
    &\varliminf_{\epsilon\to 0} \epsilon \, \ms H^\epsilon(P_\epsilon)
    \\ &
    \ge \int\!\!dP(X)
    \!\int_0^1\!\!\!dt\Big[
    w(X_t,\dot X_t) \!\cdot\! \big(\dot{X}_t - b(X_t)\big)
    - w(X_t,\dot X_t) \!\cdot\! a(X_t) w(X_t,\dot X_t) \Big].
  \end{align*}
  Recalling \eqref{2.4} and \eqref{2.5} we conclude, using again
  Lemma~\ref{t:3.4} and dominated convergence, by considering a
  suitable sequence $\{w_n\}$ with $w_n$ bounded for each $n$ and
  converging pointwise to $w^*$ with
  $w^*(x,y)=(1/2)\, a(x)^{-1}[y-b(x)]$.
\end{proof}

In view of density result proven in Lemma~\ref{t:2.3}, in order to
construct the recovery sequence in item (iii) of Theorem~\ref{3.2} it
suffices to consider the case in which $P$ is smooth holonomic, i.e.\
$P = S^{-1} \int_0^S\!ds\, \delta_{\theta_s Y}$ for some $S>0$ and
some $S$-periodic path $Y\in C^1(\bb R,\bb R^n)$.  To construct the
sequence $\{P_\epsilon\}$ for such $P$, pick first
$U\colon \bb R^n\to \bb R$ such that: $U\in C^2(\bb R^n)$, the minimum
of $U$ is uniquely attained at $x=0$, the Hessian $D^2 U(0)$ is
strictly positive definite, and $U=V$ outside some compact set
$K\subset\subset\bb R^n$.  Consider now the non-autonomous stochastic
differential equation
\begin{equation}
  \label{dhe}
  \begin{cases}
    d\eta^\epsilon_t = \tilde b_\epsilon (t,\eta^\epsilon_t) dt
    + \sqrt{2\epsilon}\, \sigma(\eta^\epsilon_t-Y_t)  dw_t
    \\
    \eta^\epsilon_0=x
  \end{cases}
\end{equation}
where
\begin{equation}
  \label{tbe}
  \tilde b_\epsilon (t,x):=-a(x-Y_t) \nabla U\big(x-Y_t\big) +
  \epsilon\,\nabla\cdot a (x-Y_t) + \dot{Y}_t,
\end{equation}
in which $\nabla\cdot a$ is the vector field given by the divergence of
$a$, i.e.\ $(\nabla\cdot a)_i =\sum_j \partial_j a_{j,i}$.
Note that $\tilde b_\epsilon$ is $S$-periodic in the first variable.
Denote the law of $\eta^\epsilon$ by $\bb Q^\epsilon_x$ and let
$\mu^\epsilon$ be the probability on $\bb R^n$ whose density is
proportional to $\exp\{-U/\epsilon\}$.  Set finally
$\nu^\epsilon:= \mu^\epsilon(Y_0+\,\cdot)$ and
$\bb Q^\epsilon_{\nu^\epsilon}:= \int\!d\nu^\epsilon(x)\,\bb
Q^\epsilon_x$.

\begin{lemma}
  \label{t:3.5}
  The probability $\bb Q^\epsilon_{\nu^\epsilon}$ is invariant with
  respect to $\theta_{S}$. Furthermore $\bb Q^\epsilon_{\nu^\epsilon}\to
  \delta_Y$ as $\epsilon\to 0$ and for each $\epsilon\in (0,\epsilon_0)$
  there exist a constant $C_\epsilon$ such
  that for any $n\in \bb N$ and $s\in[0,S]$ 
  \begin{equation*}
    \begin{split}
        &\frac{\epsilon}{n}
        \, \ms H^\epsilon_{[0, n S]}
        \big(\bb Q^\epsilon_{\nu^\epsilon}\circ\theta_s^{-1} \big)
        \le \frac {C_\epsilon}n
        \\
        &\quad +
        \frac 14 \int\!\! d \bb Q^\epsilon_{\nu^\epsilon}(X)
        \int_0^S\!\!dt\,
        \big[ \tilde b_\epsilon(t,X_t) -b(X_t) \big] \cdot
    a^{-1}(X_t) \big[ \tilde b_\epsilon(t,X_t) -b(X_t) \big]. 
  \end{split}
\end{equation*}
\end{lemma}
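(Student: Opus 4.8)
The plan is to read \eqref{dhe}--\eqref{tbe} in the comoving frame. Set $\zeta^\epsilon_t := \eta^\epsilon_t - Y_t$; the drift \eqref{tbe} is tailored so that $\zeta^\epsilon$ is a diffusion whose generator is, at every instant, self-adjoint in $L^2(\mu^\epsilon)$ — it is of the gradient form associated to the potential $U$ — so that $\mu^\epsilon$ is invariant under its (time-inhomogeneous) evolution. Since $U$ obeys the analogue of Assumption~\ref{t:2.0}, coinciding with $V$ off a compact set and having $e^{-U/\epsilon}$ integrable, the martingale problem for $\zeta^\epsilon$ is well posed; started from $\mu^\epsilon$ the process is stationary and extends to a two-sided stationary Markov process. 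Because $Y$ is $S$-periodic, the law of $t\mapsto \zeta^\epsilon_t+Y_t$ — which is exactly $\bb Q^\epsilon_{\nu^\epsilon}$ — is invariant under $\theta_S$, proving the first assertion.

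For the convergence $\bb Q^\epsilon_{\nu^\epsilon}\to\delta_Y$ it suffices that $\zeta^\epsilon\to 0$, in probability and uniformly on compact time windows, as $\epsilon\to 0$. Laplace's method, using that $U$ attains its minimum only at the origin with $D^2U(0)$ strictly positive, gives $\mu^\epsilon\to\delta_0$, so all marginals of $\zeta^\epsilon$ collapse at the origin; since the origin is an asymptotically stable rest point of the zero-noise flow, with Lyapunov function $U$, the Freidlin--Wentzell estimates, run forwards and — via reversibility — backwards in time, confine $\zeta^\epsilon$ to an arbitrarily small tube around $0$ with probability tending to $1$ on bounded intervals. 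Hence $\eta^\epsilon=\zeta^\epsilon+Y\to Y$, that is $\bb Q^\epsilon_{\nu^\epsilon}\to\delta_Y$ in $\ms P_\theta$.

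The entropy estimate combines additivity of relative entropy with Girsanov's formula. Bounding $\ms H^\epsilon_{[0,nS]}$ by the relative entropy $\ms H_{[0,nS]}(\,\cdot\,|\,\bb P^\epsilon_{\pi^\epsilon})$ (see \eqref{13}) and recalling that, on $[0,nS]$, $\bb Q^\epsilon_{\nu^\epsilon}\circ\theta_s^{-1}$ agrees with $\bb Q^\epsilon_{\nu^\epsilon}$ on $[-s,nS-s]$ up to translation, I would split that interval into $n$ blocks of length $S$. Using the Markov property and stationarity of $\bb P^\epsilon_{\pi^\epsilon}$, the Markov property of $\bb Q^\epsilon_{\nu^\epsilon}$ with its $S$-periodic-in-time drift, and the $\theta_S$-invariance of $\bb Q^\epsilon_{\nu^\epsilon}$ (which makes its marginal at every node one and the same probability), the chain rule produces a single boundary term $\ms H(\nu^\epsilon_s|\pi^\epsilon)$ plus $n$ equal block contributions. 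Each block contribution is evaluated, by Girsanov's theorem and the usual localization that copes with the unbounded drift $a\nabla V$, as $\tfrac1{4\epsilon}\int d\bb Q^\epsilon_{\nu^\epsilon}(X)\int_0^S\!dt\,[\tilde b_\epsilon(t,X_t)-b(X_t)]\cdot a^{-1}(X_t)[\tilde b_\epsilon(t,X_t)-b(X_t)]$, the phase $s$ dropping out because the integral of the $S$-periodic integrand over one full period against the $\theta_S$-invariant $\bb Q^\epsilon_{\nu^\epsilon}$ is insensitive to where the period begins. Multiplying by $\epsilon/n$ and putting $C_\epsilon:=\epsilon\sup_{s\in[0,S]}\ms H(\nu^\epsilon_s|\pi^\epsilon)$ yields the displayed bound.

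I expect the main obstacle to be the rigour of this last step. One must verify $\ms H(\nu^\epsilon_s|\pi^\epsilon)<+\infty$ for each fixed $\epsilon$: as $\nu^\epsilon_s$ has an explicit Gaussian-type density this reduces to a pointwise lower bound on the density of $\pi^\epsilon$, obtained from parabolic regularity of the transition density $p^\epsilon(1,\cdot,\cdot)$ together with the exponential control of $\pi^\epsilon$ at infinity already established in the proof of Lemma~\ref{t:3.3}. One must also justify the Girsanov change of measure despite the non-Lipschitz, unbounded drift, by truncating the state space and controlling the exponential (super)martingale via the dissipativity built into Assumption~\ref{t:2.0}; the remaining points — well-posedness of \eqref{dhe}, identification of $\bb Q^\epsilon_{\nu^\epsilon}$ with an element of $\ms P_\theta$, and uniformity in $s\in[0,S]$ — are routine.
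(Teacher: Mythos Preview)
Your overall strategy coincides with the paper's: pass to the comoving process $\zeta^\epsilon=\eta^\epsilon-Y$, identify $\mu^\epsilon$ as its invariant measure, deduce $\theta_S$-invariance of $\bb Q^\epsilon_{\nu^\epsilon}$ and the convergence $\bb Q^\epsilon_{\nu^\epsilon}\to\delta_Y$, and obtain the entropy bound by Girsanov with the boundary term $\mathrm{Ent}(\nu^\epsilon\,|\,\pi^\epsilon)$ playing the role of $C_\epsilon$. Two points deserve correction.

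First, a simplification you are missing: the process $\zeta^\epsilon$ is \emph{autonomous}, not time-inhomogeneous. Substituting $\eta^\epsilon=\zeta^\epsilon+Y$ into \eqref{dhe}--\eqref{tbe} gives
\[
d\zeta^\epsilon_t=\big[-a(\zeta^\epsilon_t)\nabla U(\zeta^\epsilon_t)+\epsilon\,\nabla\!\cdot a(\zeta^\epsilon_t)\big]\,dt+\sqrt{2\epsilon}\,\sigma(\zeta^\epsilon_t)\,dw_t,
\]
a time-homogeneous reversible diffusion with invariant measure $\mu^\epsilon$. Its stationary law is therefore fully translation invariant, and the $\theta_S$-invariance of $\bb Q^\epsilon_{\nu^\epsilon}$ follows immediately from the $S$-periodicity of $Y$. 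This also lets you compute the entropy directly on $[0,nS]$ without a block decomposition: Girsanov gives $\ms H_{[0,nS]}(\bb Q^\epsilon_{\nu^\epsilon}\,|\,\bb P^\epsilon_{\pi^\epsilon})=\mathrm{Ent}(\nu^\epsilon|\pi^\epsilon)+\tfrac{1}{4\epsilon}\int d\bb Q^\epsilon_{\nu^\epsilon}\int_0^{nS}\!|\tilde b_\epsilon-b|_{a^{-1}}^2$, and the $\theta_S$-invariance turns the time integral into $n$ copies of $\int_0^S$.

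Second, and more seriously, your proposed route to $\mathrm{Ent}(\nu^\epsilon_s|\pi^\epsilon)<+\infty$ does not work as written. You invoke ``parabolic regularity of $p^\epsilon(1,\cdot,\cdot)$ together with the exponential control of $\pi^\epsilon$ at infinity already established in the proof of Lemma~\ref{t:3.3}''. But Lemma~\ref{t:3.3} provides an \emph{upper} tail bound on $\pi^\epsilon$ (exponential tightness), whereas what is needed here is a \emph{lower} bound on the density $\rho^\epsilon$ at infinity, since $\nu^\epsilon_s$ has full support with tails $\sim e^{-V/\epsilon}$. Local parabolic regularity gives positivity on compacts but no quantitative tail lower bound under an unbounded drift. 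The paper closes this gap by a PDE argument: setting $v^\epsilon=\rho^\epsilon e^{\gamma V/\epsilon}$ for a suitably small $\gamma>0$, one checks that $v^\epsilon$ satisfies an elliptic equation $A_\epsilon v^\epsilon+hv^\epsilon=0$ with $h\ge 0$ outside a large ball, and the Phragm\`en--Lindel\"of maximum principle then yields $v^\epsilon(x)\ge m_\epsilon>0$ for $|x|\ge R$, i.e.\ $\rho^\epsilon(x)\ge m_\epsilon e^{-\gamma V(x)/\epsilon}$. This is precisely what makes $\int d\nu^\epsilon_s\,\log(1/\rho^\epsilon)$ finite, using $U=V$ off a compact and the super-linear growth of $V$.
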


\begin{proof}
  By direct computation $\bb Q^\epsilon_{\nu^\epsilon}$ is the law of
  $Y+\zeta^\epsilon$ where $\zeta^\epsilon$ is the stationary
  process associated to the autonomous stochastic differential
  equation
  \begin{equation*}
    d\zeta^\epsilon_t = \big[
    -a(\zeta^\epsilon_t) \nabla U\big(\zeta^\epsilon_t\big)
    + \epsilon\,\nabla\cdot a(\zeta^\epsilon_t)\big]dt
    + \sqrt{2\epsilon}\, \sigma(\zeta^\epsilon_t)  dw_t.
  \end{equation*}
  Observe indeed that $\zeta^\epsilon$ is reversible with respect to
  $\mu^\epsilon$. Since $Y$ is $S$-periodic and the law of
  $\zeta^\epsilon$ is translation invariant we deduce that
  $\bb Q^\epsilon_{\nu^\epsilon}$ is invariant with respect to
  $\theta_S$.  By the properties of $U$, we readily conclude that
  $\zeta^\epsilon$ converges to $0$ in probability and therefore that
  $\bb Q^\epsilon_{\nu^\epsilon}\to \delta_Y$.

  For notation simplicity, we prove the entropy bound only when $s=0$.
  Let $M^\epsilon$ be the $\bb P^\epsilon_x$ martingale given by
  \begin{equation*}
    M^\epsilon_t := \frac 1{2\epsilon} \int_0^t
    a^{-1}(X_s) \big[ \tilde b_\epsilon(s,X_s) -b(X_s) \big]
    \cdot \big( dX_s-b(X_s)ds \big)
  \end{equation*}
  whose quadratic variation is 
    \begin{equation*}
    \langle M^\epsilon \rangle_t := \frac 1{2\epsilon} \int_0^t\!ds\, 
    \big[ \tilde b_\epsilon(s,X_s) -b(X_s) \big] \cdot
    a^{-1}(X_s) \big[ \tilde b_\epsilon(s,X_s) -b(X_s) \big]. 
  \end{equation*}
  By Girsanov formula, for each $T>0$
  \begin{equation*}
    \frac {d \big(\bb Q^\epsilon_x\big)_{[0,T]}}
    {d \big(\bb P^\epsilon_x\big)_{[0,T]}}
    =\exp\Big\{  M^\epsilon_T -
    \frac 12\langle M^\epsilon \rangle_T \Big\}.
  \end{equation*}
  Using \cite[Thm.~VIII.1.7]{RY} we deduce
  \begin{equation*}
    \textrm{Ent}\Big( \big(\bb Q^\epsilon_x\big)_{[0,T]} \big|
    \big(\bb P^\epsilon_x\big)_{[0,T]} \Big)
    = \int\! d\bb Q^\epsilon_x\,
    \Big[ M^\epsilon_T - \frac 12\langle M^\epsilon \rangle_T \Big]
    = \frac 12 \int\! d \bb Q^\epsilon_x \, \langle M^\epsilon \rangle_T
  \end{equation*}
  which yields
  \begin{equation*}
    \ms H^\epsilon_{[0,T]} \big( \bb Q^\epsilon_{\nu^\epsilon} \big)
    = \textrm{Ent}(\nu^\epsilon|\pi^\epsilon)
    +\frac 12 \int\! d \bb Q^\epsilon_{\nu^\epsilon} \,
    \langle M^\epsilon \rangle_T. 
  \end{equation*}
  
  In view of the $\theta_S$ invariance of
  $\bb Q^\epsilon_{\nu^\epsilon}$, setting
  $C_\epsilon:=\textrm{Ent}(\nu^\epsilon|\pi^\epsilon)$, the stated
  bound follows once we show that $C_\epsilon$ is finite.  To this
  end, we first obtain a lower bound on the tail of $\pi^\epsilon$.
  Denote by $\rho^\epsilon$ the density of $\pi^\epsilon$ with respect
  to the Lebesgue measure, $d\pi^\epsilon = \rho^\epsilon \, dx$.  By
  Assumption~\ref{t:2.0} and standard results, $\rho^\epsilon$ is
  smooth, strictly positive, and solves the stationary Fokker-Planck
  equation
  \begin{equation*}
  \epsilon \sum_{i,j=1}^n \partial_i\partial_j
  \big( a_{i,j} \rho^\epsilon\big)
    - \sum_{i=1}^n \partial_i \big(b_i \rho^\epsilon\big) =0\;.
  \end{equation*}
  Set $v^\epsilon := \rho^\epsilon \exp\{\gamma V/\epsilon\}$ for
  some $\gamma>0$ to be chosen later; by direct computation it solves
  \begin{equation*}
    A_\epsilon v^\epsilon + h v^\epsilon =0 
  \end{equation*}
  where $A_\epsilon$ is the elliptic operator defined by
  \begin{equation*}
    A_\epsilon v :=\epsilon \tr( a D^2 v)
    -\big( b + 2\gamma a\nabla V -2\epsilon \nabla\cdot a\big) \cdot \nabla v
  \end{equation*}
  and
  \begin{equation*}
    h := \frac \gamma\epsilon
    \big[ b \cdot \nabla V  + \gamma \nabla V\cdot a \nabla V\big]
    - \gamma \tr(aD^2V)
    -2\gamma (\nabla\cdot a)\cdot  \nabla V 
    - \nabla\cdot b 
    +\epsilon \partial_i\partial_j a_{i,j}.
  \end{equation*}
  As follows from Assumption~\ref{t:2.0}, for each
  $\epsilon\in (0,\epsilon_0)$ there exist $\gamma, R>0$ such that
  $h(x) \ge 0$ for all $x\in \bb R^n$ such that $|x|\ge R$.  Let now
  $m_\epsilon := \inf\{v^\epsilon(x), \, |x|=R\}>0$ and set
  $u^\epsilon= m_\epsilon - v^\epsilon$. Then $u^\epsilon (x) \le 0$
  for $|x|=R$ and, by the positivity of $v^\epsilon$, we have
  $u^\epsilon(x) \le m_\epsilon$ for any $x\in \bb R$.
  Finally, by the choices of $\gamma$ and $R$, for $|x|>R$ the
  function $u^\epsilon$ solves
  \begin{equation*}
    A_\epsilon u^\epsilon = A_\epsilon (m_\epsilon -v^\epsilon) =
    - A_\epsilon v^\epsilon = h v^\epsilon \ge 0.
  \end{equation*}
  From the Phragm\`en-Lindelh\"of maximum principle, see
  \cite[Thm.~2.19]{prwei}, we then deduce $u^\epsilon(x)\le 0$,
  for all $x\in \bb R^n$ such that $|x|>R$. Hence
  $\rho^\epsilon(x) \ge m_\epsilon \exp\{-\gamma V(x)/\epsilon\}$ for
  $|x|\ge R$. As $\nu^\epsilon(dx)
  = Z_\epsilon^{-1} \exp\{- U(x-Y_0)/\epsilon\} dx$ with $Z_\epsilon$
  the appropriate normalization, we get
  \begin{equation*}
    \begin{split}
      \textrm{Ent}(\nu^\epsilon|\pi^\epsilon) &= \int\!
      d\nu^\epsilon(x) \, \log \frac {e^{-
          U(x-Y_0)/\epsilon}}{Z_\epsilon \rho^\epsilon(x)}
      \\
      &\le \int_{|x|\le R} \! d\nu^\epsilon(x) \, \log \frac
      {e^{-U(x-Y_0)/\epsilon}}{Z_\epsilon\rho^\epsilon(x)}
      \\
      &\; +\int_{|x|> R} \! d\nu^\epsilon(x)\, \Big[ \log \frac
      1{Z_\epsilon m_\epsilon} -\frac 1\epsilon U(x-Y_0) +
      \frac{\gamma}{\epsilon} V(x) \Big]
    \end{split}
  \end{equation*}
  which is bounded as $V$ has super-linear growth as $|x|\to \infty$
  and $U=V$ outside a compact. 
\end{proof}

\begin{proof}[Proof of Theorem~\ref{t:3.2}, item (iii).]
  By Lemma~\ref{t:2.3} it suffices to consider the case in which $P$ is
  smooth holonomic.
  For $P$ and $\bb Q^\epsilon_{\nu^\epsilon}$ as introduced before
  Lemma~\ref{t:3.5}, set
  \begin{equation*}
    P_\epsilon := \frac 1S \int_0^S \!ds\,\bb
    Q^\epsilon_{\nu^\epsilon}\circ \theta_{s}^{-1} 
  \end{equation*}
  that is translation invariant by the $\theta_S$ invariance of
  $\bb Q^\epsilon_{\nu^\epsilon}$.
  By Lemma~\ref{t:3.5}, the sequence $\{P_\epsilon\}$ converges to
  $P$.  Moreover, using also \eqref{13} and the convexity of the
  relative entropy,
  \begin{equation*}
    \epsilon \,\ms H^\epsilon ( P_\epsilon) 
    \le 
    \frac 1{4S}\int_0^S\!\!dt
    \int\!\! d \bb Q^\epsilon_{\nu^\epsilon}(X)\,
    \big[ \tilde b_\epsilon(t,X_t) -b(X_t) \big] \cdot
    a^{-1}(X_t) \big[ \tilde b_\epsilon(t,X_t) -b(X_t) \big]. 
  \end{equation*}

  Recalling \eqref{tbe}, since $\bb Q^\epsilon_{\nu^\epsilon}\to
  \delta_Y$ then $\tilde b_\epsilon(t,\cdot)$ converges in
  $\bb Q^\epsilon_{\nu^\epsilon}$-probability to $\dot Y_t$.
  As the marginal at time $t$ of  $\bb Q^\epsilon_{\nu^\epsilon}$ is
  equal to $\nu^\epsilon_t := \mu^\epsilon(Y_t+ \,\cdot)$ and $U=V$
  outside some compact, we obtain the needed uniform integrability 
  to infer
  \begin{equation*}
    \varlimsup_{\epsilon\to 0}
    \epsilon \,\ms H^\epsilon ( P_\epsilon) 
    \le  \frac 1{4S}\int_0^S\!\!dt\, \big[ \dot Y_t -b(Y_t) \big]
    \cdot a^{-1}(Y_t) \big[ \dot Y_t - b(Y_t) \big]
    = \ms I (P)    \,, 
  \end{equation*}
  which concludes the proof.
\end{proof}

\section{Large deviations of the Gallavotti-Cohen observable}
\label{s:4}

The Gallavotti-Cohen functional has been originally introduced in the
context of chaotic deterministic dynamical systems as the expansion
rate of the phase-space volume and it has been shown to satisfy the
so-called fluctuation theorem \cite{gc}.  The definition of this
functional for stochastic dynamics has been originally discussed in
\cite{ku1} and in more generality in \cite{ls,ma1};
we refer to \cite{jpr} for a review and to \cite{cl} for an
experimental check of the fluctuation theorem.

In the present context of non-degenerate
diffusion processes, introduce the time inversion as the involution
$\Theta \colon C(\bb R;\bb R^n) \to C(\bb R;\bb R^n)$ given by
$(\Theta X)_t:= X_{-t}$. Recalling that
$\bb P^\epsilon_{\pi^\epsilon}$ denotes the stationary process
associated to \eqref{2.2}, the Gallavotti-Cohen functional
is defined by
\begin{equation*}
  \widehat{W}^\epsilon_{[0,T]}  :=
  \frac {\epsilon}T \log \frac
    {d \big( \bb P^\epsilon_{\pi^\epsilon}\big)_{[0,T]}}
  {d \big( \bb P^\epsilon_{\pi^\epsilon}\circ \Theta^{-1}\big)_{[0,T]}}
\end{equation*}
where the subscript $[0,T]$ denotes the restriction of the probability
to that time interval. The factor $\epsilon$ has been inserted for
notation convenience when discussing the small noise limit
$\epsilon\to 0$.  Note that
$\bb E^\epsilon_{\pi^\epsilon} \big(
\widehat{W}^\epsilon_{[0,T]}\big)\ge 0$ and
this expectation equals, apart a factor $\epsilon$, 
the relative entropy per unit of time of
$\bb P^\epsilon_{\pi^\epsilon}$ with respect to
$\bb P^\epsilon_{\pi^\epsilon}\circ \Theta^{-1}$.

The content of the fluctuation theorem is the following. Assume that
the family of real random variables
$\{ \widehat{W}^\epsilon_{[0,T]} \}_{T>0}$ satisfies a large deviation
principle as $T\to \infty$ and denote by
$s_\epsilon\colon \bb R\to [0,+\infty]$ the rate function. Then the
odd part of $s_\epsilon$ is linear,
$s_\epsilon(q)-s_\epsilon(-q) = -\epsilon q$, where the factor
$\epsilon$ is due to the choice of the normalization. The physical
interpretation of the fluctuation theorem is that the ratio between
the probability of the events
$\{ \widehat{W}^\epsilon_{[0,T]} \approx q\}$ and
$\{ \widehat{W}^\epsilon_{[0,T]} \approx -q\}$ becomes fixed,
independently of the model, in the large time limit.

An informal computation based on the Girsanov formula shows that
\begin{equation}
  \label{wteg}
  \widehat{W}^\epsilon_{[0,T]} (X) = \frac 1T \int_0^T
  a(X_t)^{-1} b(X_t)\circ dX_t
  - \frac {\epsilon}T
  \log \frac {\rho^\epsilon(X_T)}{\rho^\epsilon(X_0)}
\end{equation}
where $\circ$ denotes the Stratonovich integral and
$\rho^\epsilon$ is the density of the invariant measure
$\pi^\epsilon$.
In the case of a compact state space, the standard route to obtain the
large deviation principle for the family
$\{\widehat{W}^\epsilon_{[0,T]}\}_{T>0}$ is the following \cite{ls}.
Neglect the second term on the right hand side of \eqref{wteg}, which
becomes irrelevant in the limit $T\to\infty$, and prove, by using
Girsanov and Feynman-Kac formulae together with the Perron-Frobenious
theorem, that the limit
\begin{equation}
    \label{lwteg}
  \Lambda_\epsilon(\lambda) := \lim_{T\to \infty}
  \frac 1T \log \bb E^\epsilon_{\pi^\epsilon}
  \Big( \exp\Big\{ \lambda \int_0^T a(X_t)^{-1}b(X_t)\circ
  dX_t\Big\}\Big), 
\end{equation}
exists for each $\lambda\in \bb R$
and it can be expressed as the maximal eigenvalue of a perturbed
generator. An application of the Gartner-Ellis theorem then yields the
large deviation principle while the fluctuation theorem follows from
the symmetry $\Lambda_\epsilon(\lambda)
=\Lambda_\epsilon(-\epsilon -\lambda)$. We refer to \cite[\S~5]{ls} for the
informal derivation of this symmetry in the context of diffusions
processes. 

As detailed in \cite{jps}, the route sketched above in general fails
in the present case of non-compact space state: it is neither possible
to neglect the second term on the right hand side of \eqref{wteg} nor
to prove the existence of the limit in \eqref{lwteg} for any
$\lambda\in \bb R$.  Following \cite{bdg,Ra,wxx} and recalling the
decomposition \eqref{2.0}, we here \emph{define} the Gallavotti-Cohen
observable by
\begin{equation}
  \label{wen}
  {W}_{[0,T]} (X) := \frac 1T \int_0^T a(X_t)^{-1} c(X_t)\circ dX_t,
\end{equation}
namely as the work done, in the metric defined by the diffusion
matrix, by the non-conservative part of the drift.  In contrast to
\eqref{wteg}, ${W}_{[0,T]}$ is an empirical observable namely, an
explicit functional of the sample path.  As shown in
\cite{bdg,Ra,wxx}, for each $\epsilon>0$ the family of probabilities
on $\bb R$ given by
$\{\bb P^\epsilon_{\pi^\epsilon} \circ (W_{[0,T]})^{-1}\}_{T>0}$
satisfies a large deviation principle and the corresponding rate
function $s_\epsilon$ satisfies the fluctuation theorem.  The present
purpose is to obtain a variational representation of this rate
function in the small noise limit $\epsilon\to 0$. This problem has
been originally addressed heuristically in \cite{ku2}. A mathematical
analysis has been carried out in \cite{bdg} when the limit
$\epsilon\to 0$ is taken before the limit $T\to \infty$ and the
limiting rate function is then expressed in terms of the
Freidlin-Wentzell rate functional. In the same scaling as in
\cite{bdg}, we here show that the limiting rate function is actually
independent of the limiting procedure.  This analysis complements the
one in \cite{Ra}, where the small noise limit of the rate function for
the Gallavotti-Cohen observable is carried out with a different
scaling, that can be seen as a next order asymptotic with respect to
the one performed.

Before discussing the Gallavotti-Cohen observable, we note that the
odd part, with respect to the involution $\Theta$, of the rate
function $\ms I$ in \eqref{2.5} is in fact expressed in terms of the
functional introduced \eqref{wen}. In this respect, the next statement
can be seen as a fluctuation theorem at the level of the empirical process.

\begin{proposition}
  \label{t:5.1}
  For any $P\in \ms P_\theta$ such that $\ms I(P) < +\infty$
  \begin{equation*}
    \ms I\big(P\circ \Theta^{-1}\big) -\ms I\big(P\big)
    = \int\! dP(X)\, W_{[0,1]}(X)
    = \int\! d P (X)\int_0^1\!\! dt \, a(X_t)^{-1} c(X_t) \cdot \dot{X}_t.
  \end{equation*}
\end{proposition}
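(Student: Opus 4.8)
The plan is to reduce the identity, via the time-reversal substitution and the translation invariance of $P$, to an elementary pointwise computation on the single interval $[0,1]$, and then to integrate. Since $\ms I(P)<+\infty$, as observed after \eqref{2.5} the measure $P$ is concentrated on absolutely continuous paths; hence $\Theta$ is defined on a set of full $P$-measure, $P\circ\Theta^{-1}\in\ms P_\theta$, and every step below need only be checked for absolutely continuous $X$. Introduce the Freidlin--Wentzell functional with reversed drift, given for $X\in H_1([S_1,S_2])$ by
\begin{equation*}
  I^-_{[S_1,S_2]}(X):=\frac14\int_{S_1}^{S_2}\!dt\,[\dot X_t+b(X_t)]\cdot a^{-1}(X_t)[\dot X_t+b(X_t)]
\end{equation*}
and $+\infty$ otherwise. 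The change of variables $s=-t$, using $\frac d{dt}(\Theta X)_t=-\dot X_{-t}$, gives $I_{[0,1]}(\Theta X)=I^-_{[-1,0]}(X)$; since moreover $I^-_{[-1,0]}(X)=I^-_{[0,1]}(\theta_1 X)$, the translation invariance of $P$ yields
\begin{equation*}
  \ms I\big(P\circ\Theta^{-1}\big)=\int\!dP(X)\,I_{[0,1]}(\Theta X)=\int\!dP(X)\,I^-_{[0,1]}(X).
\end{equation*}

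Next I would carry out the pointwise computation on $[0,1]$. Expanding the squares and using the symmetry of $a^{-1}$ gives $I^-_{[0,1]}(X)-I_{[0,1]}(X)=\int_0^1 dt\,\dot X_t\cdot a^{-1}(X_t)\,b(X_t)$; inserting the decomposition $b=-a\nabla V+c$ and applying the chain rule to the absolutely continuous map $t\mapsto V(X_t)$ turns this into
\begin{equation*}
  I^-_{[0,1]}(X)-I_{[0,1]}(X)=\big[V(X_0)-V(X_1)\big]+\int_0^1\!dt\,a^{-1}(X_t)c(X_t)\cdot\dot X_t ,
\end{equation*}
and the last integral equals $W_{[0,1]}(X)$ for $P$-a.e.\ $X$, since Stratonovich and ordinary integrals coincide along absolutely continuous paths. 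By \eqref{3.3bis} and the boundedness of $a^{-1}$ and $c$, the functionals $I_{[0,1]}$, $I^-_{[0,1]}$, $W_{[0,1]}$ and $X\mapsto V(X_0)-V(X_1)$ all belong to $L^1(dP)$, so we may split the $P$-integral and obtain
\begin{equation*}
  \ms I\big(P\circ\Theta^{-1}\big)-\ms I(P)=\int\!dP(X)\big[V(X_0)-V(X_1)\big]+\int\!dP(X)\,W_{[0,1]}(X),
\end{equation*}
both terms being finite.

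It then suffices to show that the first term on the right-hand side vanishes, and this is the step I expect to be the \emph{main obstacle}: Assumption~\ref{t:2.0} allows $V$ to grow faster than $|\nabla V|^2$, so $V(X_0)$ need not lie in $L^1(dP)$ and one cannot directly apply the translation invariance of $P$ to $\int V(X_0)\,dP$. I would instead argue by truncation. Pick $\phi_M\in C^2(\bb R)$ with $\phi_M(r)=r$ for $r\le M$, $\phi_M$ bounded and $0\le\phi_M'\le1$, and set $V_M:=\phi_M\circ V$; then $V_M$ is bounded, $\nabla V_M=\phi_M'(V)\,\nabla V$ converges pointwise to $\nabla V$, and $|\nabla V_M|\le|\nabla V|$. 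For each fixed $M$, the boundedness of $V_M$ and the translation invariance of $P$ give $\int dP\,[V_M(X_0)-V_M(X_1)]=0$, while $V_M(X_0)-V_M(X_1)=-\int_0^1\dot X_t\cdot\nabla V_M(X_t)\,dt$ has integrand bounded in absolute value by $\frac12(|\dot X_t|^2+|\nabla V(X_t)|^2)$, which by \eqref{3.3bis} and translation invariance is integrable on $D(\bb R;\bb R^n)\times[0,1]$ with respect to $dP\otimes dt$, uniformly in $M$. Dominated convergence, first in $t$ and then in $X$, then gives $\int dP\,[V(X_0)-V(X_1)]=\lim_{M\to\infty}\int dP\,[V_M(X_0)-V_M(X_1)]=0$, which concludes the argument. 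The verification of the elementary pointwise identities and of the $L^1$ bounds used above is routine.
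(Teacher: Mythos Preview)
Your proof is correct and follows the same route as the paper: expand the square in \eqref{2.4}, use the decomposition \eqref{2.0}, and kill the boundary term $V(X_0)-V(X_1)$ by translation invariance, with \eqref{3.3bis} supplying the integrability. The paper's own proof is a two-line sketch that simply asserts ``the boundary term vanishes by translation invariance''; your truncation argument for this step is a genuine addition, since \eqref{3.3bis} controls $|\nabla V(X_0)|^2$ but not $V(X_0)$ itself, and it cleanly justifies what the paper leaves implicit.
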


\begin{proof}
  Recalling \eqref{3.3bis}, that provides the needed integrability
  conditions, the proof is simply achieved by using the decomposition
  \eqref{2.0} and expanding the square in \eqref{2.4}. Note indeed
  that the boundary term vanishes by translation invariance.
\end{proof}

In the next statement we employ the same convention on
$\varlimsup_{\epsilon,T}$ and $\varliminf_{\epsilon,T}$ as the one
used in Theorem~\ref{t:1}.

\begin{theorem}
  \label{t:5.2}
  Assume that 
  $|x|\le C \big(1+ \big|\nabla V(x)\big|^2 \big)$, $x\in \bb R^n$, 
  for some constant $C>0$.
  Then, as $\epsilon\to 0$ and $T\to \infty$, the family of probabilities on
  $\bb R$ given by 
  $\big\{ \bb P^\epsilon_x \circ (W_{[0,T]})^{-1},\, T>0,\,\epsilon>0\big\}$
  satisfies, uniformly for $x$ in compact sets, a large deviation
  principle with speed $\epsilon^{-1}T$ and rate function
  $s\colon \bb R\to [0,+\infty]$ given by
  \begin{equation*}
    s(q) = \inf \Big\{ \ms I(P),\,
    \int\! d P (X) \int_0^1\!dt\, a(X_t)^{-1}c(X_t) \cdot \dot X_t =q
    \Big\}.
  \end{equation*}
  Namely, for each compact set $K\subset\subset \bb R^n$, each closed
  set $C \subset \bb R$, and each open set $A\subset \bb R$
  \begin{equation*}
    \begin{split}
      \varlimsup_{T,\epsilon}\; \sup_{x \in K} \frac \epsilon T \log
      \bb P^\epsilon_x \big( W_{[0,T]} \in C\big) \le -\inf_{q \in C}
      s(q)
      \\
      \varliminf_{T,\epsilon}\; \inf_{x \in K} \frac \epsilon T \log
      \bb P^\epsilon_x \big( W_{[0,T]} \in A\big) \ge -\inf_{q \in A}
      s(q).
    \end{split}
  \end{equation*}
  Moreover, the function $s$ is good, convex, and satisfies the fluctuation
  theorem $s(-q) - s(q) = q$.
\end{theorem}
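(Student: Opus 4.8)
The plan is to derive Theorem~\ref{t:5.2} from Theorem~\ref{t:1} by the contraction principle, together with Proposition~\ref{t:5.1} for the fluctuation theorem. The key observation is that the map $\Psi\colon \ms P_\theta\to \bb R$ defined by $\Psi(P):=\int\!dP(X)\int_0^1\!dt\, a(X_t)^{-1}c(X_t)\cdot\dot X_t$ is, on the set $\{\ms I<+\infty\}$, the ``projection'' whose composition with $R_T$ reproduces (asymptotically) $W_{[0,T]}$. Concretely, one checks that for $X\in H_1([0,T])$ one has $W_{[0,T]}(X)=\Psi(R_T(X))$, using that for absolutely continuous paths the Stratonovich integral in \eqref{wen} reduces to the Lebesgue integral $\frac1T\int_0^T a(X_t)^{-1}c(X_t)\cdot\dot X_t\,dt$, the $T$-periodization contributes a vanishing boundary correction, and the defining formula \eqref{2.3} for $R_T$ together with translation invariance turns the time average into the integral against $R_T(X)$ of the functional on $[0,1]$.

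First I would establish the large deviation principle for $W_{[0,T]}$. Since $W_{[0,T]}=\Psi\circ R_T$ on the relevant set of paths, and $\Psi$ is continuous on the sublevel sets of $\ms I$ (by Lemma~\ref{t:3.4}-type estimates, i.e. the bound \eqref{3.3bis}, which provides the uniform integrability of $a^{-1}c\cdot\dot X$ using the hypothesis $|x|\le C(1+|\nabla V(x)|^2)$ to control $\dot X$ against $\ms I$), one would like to apply the contraction principle to the full large deviation principle of Theorem~\ref{t:1}. The growth hypothesis $|x|\le C(1+|\nabla V(x)|^2)$ enters precisely here: combined with \eqref{3.3bis} it guarantees that $\Psi$ is bounded and continuous along any sequence $P_k\to P$ with $\sup_k\ms I(P_k)<\infty$, so that $\Psi$ is continuous on $\{\ms I\le \ell\}$ for each $\ell$. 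A routine argument (truncating $\Psi$ and using goodness of $\ms I$) then transfers the upper and lower bounds, yielding the large deviation principle for $\{\bb P^\epsilon_x\circ(W_{[0,T]})^{-1}\}$ with rate function $s(q)=\inf\{\ms I(P)\colon \Psi(P)=q\}$. Goodness of $s$ follows from goodness of $\ms I$ and lower semicontinuity of the contracted functional; convexity of $s$ follows because $\ms I$ is affine, $\Psi$ is affine, and $\ms P_\theta$ is convex, so $s$ is the infimal convolution structure of an affine functional over a convex constraint, hence convex.

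Next I would prove the fluctuation theorem $s(-q)-s(q)=q$. By Proposition~\ref{t:5.1}, for any $P$ with $\ms I(P)<+\infty$ we have $\ms I(P\circ\Theta^{-1})=\ms I(P)+\Psi(P)$. Moreover the involution $\Theta$ satisfies $\Psi(P\circ\Theta^{-1})=-\Psi(P)$, since reversing time reverses the sign of $\dot X_t$ while $a$ and $c$ are evaluated at the same points; one checks this directly from the formula for $\Psi$ after the change of variables $t\mapsto 1-t$ and using translation invariance of $P$. Therefore, writing $q=\Psi(P)$,
\begin{equation*}
  s(-q)=\inf_{\Psi(P')=-q}\ms I(P')
  =\inf_{\Psi(P)=q}\ms I\big(P\circ\Theta^{-1}\big)
  =\inf_{\Psi(P)=q}\big[\ms I(P)+q\big]=s(q)+q,
\end{equation*}
where the second equality uses that $P\mapsto P\circ\Theta^{-1}$ is a bijection of $\{\Psi=q\}$ onto $\{\Psi=-q\}$ preserving finiteness of $\ms I$. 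This gives the claimed symmetry.

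The main obstacle is the continuity/integrability step: the functional $\Psi$ involves $\dot X_t$, which is not a bounded continuous functional on $D(\bb R;\bb R^n)$, so the contraction principle does not apply directly. Handling this requires showing that $\Psi$ is continuous on sublevel sets of $\ms I$ and that its ``unboundedness'' is exponentially negligible; this is exactly where the hypothesis $|x|\le C(1+|\nabla V(x)|^2)$ is used, via the a priori bound \eqref{3.3bis} which controls both $\int|\dot X|^2$ and $|\nabla V(X_0)|^2$ in terms of $\ms I(P)$. One also needs a matching exponential estimate at the level of $\bb P^\epsilon_x$ — that paths with large $\int|\dot X|^2$ over a unit window have superexponentially small probability uniformly in $x\in K$ — which again follows from the Freidlin-Wentzell-type bound \eqref{3.3} and the exponential tightness arguments already established in the proof of Lemma~\ref{t:3.3}. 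Once this truncation is carried out, the remaining steps are the routine applications of the contraction principle and of Proposition~\ref{t:5.1} described above.
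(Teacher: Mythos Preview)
Your fluctuation-theorem and convexity arguments are fine and match the paper's. The gap is in the contraction step. Your map $\Psi$ is only defined for $P$ supported on $H_1$ paths, and you check $W_{[0,T]}(X)=\Psi(R_T(X))$ only for $X\in H_1([0,T])$. But under $\bb P^\epsilon_x$ the paths are Brownian-like, hence $\bb P^\epsilon_x$-a.s.\ nowhere differentiable; in particular $\int_0^1|\dot X_t|^2\,dt=+\infty$ a.s., so your proposed ``exponential negligibility of paths with large $\int|\dot X|^2$'' is vacuous, and $\Psi(R_T(\xi^\epsilon))$ is simply not defined on the sample space. The Stratonovich integral in \eqref{wen} is a genuine stochastic integral and does \emph{not} reduce to $\int a^{-1}c\cdot\dot X$ on the support of $\bb P^\epsilon_x$; the identity you want holds only on a null set, so you never connect the actual random variable $W_{[0,T]}$ to a functional of $R_T$.

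The paper closes this gap by first passing to the It\^o form, $W_{[0,T]}=\widetilde W_{[0,T]}+\epsilon Z^1_T$ with $Z^1_T$ uniformly bounded, and then observing the exact identity $\int\!dR_T(X)\,\widetilde W_{[0,1]}=\widetilde W_{[0,T]}+T^{-1}Z^2_T$, where $Z^2_T(X)=a^{-1}(X_T)c(X_T)\cdot(X_0-X_T)$ is the boundary term produced by the jump in the $T$-periodization. This is where the hypothesis $|x|\le C(1+|\nabla V(x)|^2)$ is actually used: it controls $|X_T|$ (hence $Z^2_T$) through the bound on $|\nabla V(X_0)|^2$ in \eqref{3.3bis} and Lemma~\ref{t:3.4}, not through any control of $\dot X$ as you suggest. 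One is then left with the map $P\mapsto\int dP\,\widetilde W_{[0,1]}$, which is well-defined on all of $\ms P_\theta$ (as an It\^o integral) but still not continuous; the paper handles this by a truncation that produces a continuous, exponentially good approximation, after which the contraction principle applies. Your outline needs this It\^o/boundary-term rewriting (or an equivalent mollification of the stochastic integral) before any contraction argument can start.
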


Since, as proven in Lemma~\ref{3.3}, the family of probabilities
$\{\pi^\epsilon\}_{\epsilon>0}$ is exponentially tight, the previous
statement also holds when $\bb P^\epsilon_{x}$ is replaced by the
stationary process $\bb P^\epsilon_{\pi^\epsilon}$.

\begin{proof}
  It is convenient to rewrite $W_{[0,T]}$ in \eqref{wen} in terms of
  the It\^o integral,
  \begin{equation*}
    {W}_{[0,T]} (X) =  \widetilde{W}_{[0,T]} (X)
    + \epsilon \, Z^1_T(X)
  \end{equation*}
  where
  \begin{equation*}
    \widetilde{W}_{[0,T]} (X):= \frac 1T \int_0^T a(X_t)^{-1} c(X_t)\cdot dX_t
  \end{equation*}
  and, by Assumption~\ref{t:2.0}, $Z^1_T(X)$ is bounded uniformly in
  $T$ and $X$ and therefore irrelevant for the large
  deviations.
  Recalling the definition of the empirical process in \eqref{2.3} we
  next observe that
  \begin{equation}
    \label{twfr}
    \int\! d R_T(X) \, \widetilde{W}_{[0,1]} (X)
    = \widetilde{W}_{[0,T]} (X) + \frac 1T \, Z^2_T(X)
  \end{equation}
  where $Z^2_T$ takes into account the jump inserted
  by the $T$-periodization, 
  \begin{equation*}
    Z^2_T(X) =  a^{-1}(X_T) \, c(X_T) \cdot [X_0-X_T] .
  \end{equation*}
  As we assumed $|x|\le C (1+|\nabla V(x)|^2\big)$, the bounds
  provided by \eqref{3.3bis} and Lemma~\ref{t:3.4} imply that also
  $T^{-1} Z^2_T(X)$ is irrelevant for the large deviations. Therefore
  \eqref{twfr} expresses the Gallavotti-Cohen observable as a function
  of the empirical process. However, as $\widetilde{W}_{[0,1]}$
  involves the It\^o integral, this function is not continuous.  By a
  truncation procedure that it is not detailed, see
  \cite[Lemma~6.2]{Var84} for a similar argument, we can however
  construct a continuous, exponentially good approximation of
  $\widetilde{W}_{[0,1]}$ and deduce the large deviation principle
  for $\tilde W_{[0,T]}$ by contraction principle from
  Theorem~\ref{t:1}.

  The convexity of the rate function $s$ readily follows from its
  definition while the fluctuation theorem is a corollary of
  Proposition~\ref{t:5.1}. 
\end{proof}

\bigskip
\subsection*{Acknowledgments}
We thank G.\ Di Ges\`u and  M.\ Mariani for useful discussions.
We are also grateful to R. Raqu\'epas for a discussion on the
relationship between our and his work.

\end{document}